\theoremstyle{plain}
\newtheorem{theorem}{Theorem}
\newtheorem{lemma}{Lemma}
\newtheorem{proposition}{Proposition}
\newtheorem{corollary}{Corollary}
\theoremstyle{definition}
\newtheorem{definition}{Definition}
\theoremstyle{remark}
\newtheorem{remark}{Remark}
\def \M{\mathcal{M}}
\def \S{\mathcal{S}}
\def \g{\bar{g}}
\def \tangsp{\mathfrak{X}(\S)}
\def \normsp{\mathfrak{X}(\S)^\perp}
\def \hodge{\star^\perp}
\def \tr{\textsf{tr}}
\def \tildeh{\widetilde{h}}
\def \tildeA{\widetilde{A}}
\def \id{\mathbf{1}}
\def \B{\mathcal{B}}
\def \J{\mathcal{J}}
\def \T{\mathcal{T}(\S)}
\newcommand{\scprod}[2]{\langle{#1, #2}\rangle}
\newcounter{mnotecount}
\newcommand{\mnotex}[1]
{\protect{\stepcounter{mnotecount}}$^{\mbox{\footnotesize $\bullet$\themnotecount}}$ 
\marginpar{
\raggedright\tiny\em
$\!\!\!\!\!\!\,\bullet$\themnotecount: #1} }
\begin{document}

\title[Umbilical properties of spacelike co-dimension two submanifolds]
{Umbilical properties of spacelike \\ co-dimension two submanifolds}

\author[N. Cipriani]{Nastassja Cipriani}
\address{KU Leuven, Department of Mathematics, Celestijnenlaan 200B -- Box 2400, BE-3001 Leuven, Belgium and F\'isica Te\'orica, Universidad del Pa\'is Vasco, Apartado 644, 48080 Bilbao, Spain} 
\email{nastassja.cipriani@wis.kuleuven.be}

\author[J.M.M. Senovilla]{Jos\'e M. M. Senovilla}
\address{F\'isica Te\'orica, Universidad del Pa\'is Vasco, Apartado 644, 48080 Bilbao, Spain} \email{josemm.senovilla@ehu.es}

\author[J. Van der Veken]{Joeri Van der Veken}
\address{KU Leuven, Department of Mathematics, Celestijnenlaan 200B -- Box 2400, BE-3001 Leuven, Belgium} 
\email{joeri.vanderveken@wis.kuleuven.be}

\thanks{This work is partially supported by the Belgian Interuniversity Attraction Pole P07/18 (Dygest) and was initiated during a visit of the first author to University of the Basque Country supported by a travel grant of the Research Foundation -- Flanders (FWO). NC and JMMS are supported under grant FIS2014-57956-P (Spanish MINECO--Fondos FEDER). JMMS is also supported under project UFI 11/55 (UPV/EHU)}

\begin{abstract}
For Riemannian submanifolds of a semi-Riemannian manifold, we introduce the concepts of \emph{total shear tensor} and \emph{shear operators}
as the trace-free part of the corresponding second fundamental form and shape operators. The relationship between these quantities and the umbilical properties of the submanifold is shown.
Several novel notions of umbilical submanifolds are then considered along with the classical concepts of totally umbilical and pseudo-umbilical submanifolds.

Then we focus on the case of co-dimension $2$, and we present necessary and sufficient conditions for the submanifold to be umbilical with respect to a normal direction. Moreover, we prove that the umbilical direction, if it exists, is unique ---unless the submanifold is totally umbilical--- and we give a formula to compute it explicitly. When the ambient manifold is Lorentzian we also provide a way of determining its causal character. We end the paper by illustrating our results on the Lorentzian geometry of the Kerr black hole.

\end{abstract}

\keywords{umbilical submanifolds, shear, pseudo-umbilical}

\subjclass[2010]{53B25, 53B30, 53B50}

\maketitle


\section{Introduction}
Co-dimension two spacelike submanifolds play a distinctive and central role in gravitational theories based on Lorentzian geometry, especially in the prominent theory of general relativity. From a mathematical point of view this is due to the fact that the {\em extrinsic} properties of such submanifolds, described by their shape operators, encode their local, infinitesimal variation along normal directions, and in particular along causal (timelike or null) directions. In plain words, they give instantaneous information about their {\em evolution}. 

The so-called trapped, or marginally trapped, submanifolds \cite{GlobalLorentzianGeometry,Kriele,ONeill} are an outstanding example of the importance of such submanifolds for the case of co-dimension 2. They are characterized by the causal orientation of their mean curvature vector field: future timelike for trapped surfaces, and future null for marginally trapped ones. They represent situations with strong gravity, and arise in deep mathematical results such as the singularity theorems \cite{GlobalLorentzianGeometry,HawkingEllis,Kriele,SenovillaMilestone} or in the study of the several types of horizons enclosing black holes \cite{HawkingEllis,Kriele,SenovillaMilestone}. These horizons are co-dimension one submanifolds (hypersurfaces) foliated by marginally trapped compact spacelike submanifolds with co-dimension 2. The property of being trapped is related to the volume change of the submanifolds, and is encoded in the sign of the divergence or ``expansion'' of given null normal vector fields. Thus, the volume of closed (marginally) trapped submanifolds decreases (non-increases) initially along every possible direction of future evolution.

In the physical literature there is another characteristic, called {\em shear}, associated to horizons and evolution. For instance, the standard horizons of isolated black holes in equilibrium are Killing horizons \cite{Wald}, and they happen to be ``shear-free". The shear measures the local instantaneous deformation of a given submanifold when starting to evolve, while keeping its volume fixed. Mathematically, this is also associated to the extrinsic properties of the submanifold, and one can realize that the shear-free property corresponds to the submanifold being umbilical ---along the evolution direction. However, in contrast to the attention devoted to the mean curvature vector field and its properties ---e.g. \cite{AliasEstudilloRomero,BektasErgut,Chen1973,Chen1981,Chen2011} and references therein---, the part of the shape tensor that measures the referred  shear has not been considered previously, and the related umbilical properties scarcely considered ---except for the case of totally umbilical submanifolds.

Motivated by these facts, it is our purpose in this paper to introduce the extrinsic quantities associated to the mentioned deformation of submanifolds along normal directions, and to provide the relationship with their umbilical properties. We will do that in general semi-Riemannian manifolds for generic Riemannian submanifolds, keeping the dimension and co-dimension free.
To that end, we introduce the \emph{total shear tensor} as the trace-free part of the second fundamental form tensor and we call \emph{shear operators} the trace-free parts of the corresponding shape operators. We also introduce the {\em shear scalars} which allow us to make the link with the concepts in the physical literature. A new useful quadratic operator which is analogous to the Casorati operator \cite{Chen2011} but based on the shear quantities is also defined. 

Several notions of umbilical submanifolds are then considered. The classical umbilical property concerns co-dimension one surfaces, and refers to points that are ``spherical" in the sense that all {\em tangent} directions are indistinguishable there from the extrinsic point of view \cite{Eisenhart}. However, this is too demanding in higher co-dimension, because there are several normal directions and the submanifold can behave umbilically along some, but not along some other, directions. This leads to the notions of {\em totally umbilical} and, more importantly to our purposes, of {\em umbilical along some normal direction(s)}. The particular cases of pseudo-umbilical as well as the novel concepts of ortho-umbilical and sub-geodesic submanifolds are singled out.

While these developments are carried out in arbitrary dimension and co-dimension, in this paper we want to focus on the relevant case of co-dimension $2$ submanifolds. Our main goal is to characterize spacelike co-dimension $2$ submanifolds that are umbilical along a normal direction. More precisely, given an isometric immersion of a Riemannian $n$-manifold $\S$ into a semi-Riemannian $(n+2)$-manifold, we present necessary and sufficient conditions for $\S$ to be umbilical with respect to a normal direction.
The necessary and sufficient conditions we find are given in terms of the total shear tensor, or equivalently in terms of algebraic properties that have to be satisfied by any two shear operators.
We prove that the umbilical direction, if it exists, must be unique ---unless the submanifold is totally umbilical, of course. Moreover, by means of the total shear tensor we can provide a formula to compute it explicitly. When the ambient manifold is Lorentzian we also provide a way of determining its causal character. \\

Succinctly, the plan of the paper is as follows.
In section $2$ we introduce the notation and we recall some basic definitions. In section $3$ we give the definition of an umbilical submanifold, we specify several sub-cases and introduce our new shear objects. Section $4$ concentrates on general results for the special relevant case of co-dimension 2 that is to be assumed in the rest of the paper. In particular, we analyse the possibility of submanifolds which are both pseudo- and ortho-umbilical. In section $5$ we present some first results and characterize pseudo-umbilical submanifolds. Section $6$ is devoted to the main theorem. In section $7$ we determine the umbilical direction and characterize ortho-umbilical submanifolds. In section $8$ we concentrate on the Lorentzian case.  Finally, in section $9$ we give an example based on the geometry of the Kerr black hole for $n=2$.\\

Particular instances of the results in this paper and some of the underlying ideas were previously given, for the case of surfaces in 4-dimensional Lorentzian manifolds, in \cite{S}.

\section{Preliminaries: Basic concepts of submanifold theory}

Let $\S$ be an orientable $n$-dimensional manifold and $\Phi: \S \longrightarrow \M$
an immersion into an oriented $(n+k)$-dimensional semi-Riemannian manifold $(\M,\bar g)$. Assume that $g:= \Phi^{\star}\bar g $ is positive definite everywhere on $\S$, so that $(\S,g)$ is an oriented Riemannian manifold. Then $(\Phi(\S),\bar g)$ and $(\S,g)$ are isometric and we will always locally identify them. Then, $(\S, g)$ is called a {\em spacelike} submanifold of $(\M,\bar g)$.

If $\overline{\nabla}$ and $\nabla$ are the Levi-Civita connections of $(\M,\g)$ and $(\S,g)$ respectively, $X,Y \in \tangsp$ and $\xi \in \normsp$, then the formulas of Gauss and Weingarten give a decomposition of the vector fields $\overline{\nabla}_X Y$ and $\overline{\nabla}_X \xi$ in their tangent and normal components \cite{KobayashiNomizu,Kriele,ONeill}:
\begin{align*}
& \overline{\nabla}_X Y = \nabla_X Y + h(X,Y), \\
& \overline{\nabla}_X \xi = -A_{\xi}X + \nabla^{\perp}_X \xi.
\end{align*}
Here, $h(X,Y)=h(Y,X) \in \normsp$ for all $X,Y \in \tangsp$ where $h$ acts linearly (as a 2-covariant tensor) on its explicit arguments, and is called the \emph{second fundamental form} or \emph{shape tensor} of the immersion, $A_{\xi}$ is a self-adjoint operator called the \emph{shape operator} or \emph{Weingarten operator} associated to $\xi$ and $\nabla^{\perp}$ is a connection in the normal bundle. The relation between the former two is given by
\begin{align}\label{formula_Weingarten}
g(A_{\xi}X,Y) = \g(h(X,Y),\xi)
\end{align}
for all $X,Y \in \tangsp$ and all $\xi \in \normsp$. 

Given any orthonormal frame $\{e_1,\ldots,e_n\}$ in $\tangsp$, the \emph{mean curvature vector field} $H\in\normsp$ is defined as \cite{KobayashiNomizu,Kriele,ONeill}
\begin{align}\label{def0_H}
H = \frac{1}{n} \sum_{i=1}^n h(e_i,e_i).
\end{align}
The component of $H$ along a certain normal vector field $\xi \in \normsp$ up to a factor $n$ or, equivalently, the trace of the shape operator associated to $\xi$ is called the \emph{expansion of $\S$ along $\xi$}:
\begin{align}\label{definition_expansion}
\theta_{\xi} = n \, \g(H,\xi) = \tr (A_{\xi})
\end{align}
where $\tr$ denotes the trace.

\begin{remark}\label{remark_physicsnotation}
The terminology \emph{expansion} comes from the physics literature, see for example \cite{GlobalLorentzianGeometry,Kriele,S2007, S}. It should be noted that the factor $1/n$ in the definition of the mean curvature is often omitted in this literature.
\end{remark}

Given a local frame $\{\xi_1,\ldots,\xi_k\}$ in $\normsp$ which is orthonormal, i.e., $\bar g(\xi_i,\xi_j) = \epsilon_i \delta_{ij}$ with $\epsilon_{i}^2=1$ for all $i,j \in \{1,\ldots,k\}$, the \emph{Casorati operator}, see for example \cite{Chen2011}, is
defined by 
\begin{align}\label{def_B}
\B = \sum_{i=1}^k \g(\xi_i,\xi_i) A_{\xi_i}^2.
\end{align}
One can check that this definition does not depend on the chosen frame. Indeed, in the local orthonormal tangent frame $\{e_1,\ldots,e_n\}$, $\B$ is completely determined by
\begin{align}\label{formula_B}
g(\B X,Y) = \sum_{i=1}^n \g(h(X,e_i),h(Y,e_i))
\end{align}
for every $X,Y \in \tangsp$. Since all shape operators are self-adjoint, the same holds for the Casorati operator. 

Let $\T$ denote the set of all self-adjoint (1,1)-tensor fields on $\S$. We define the following pointwise positive-definite scalar product on $\T$:
\begin{align}\label{scproduct}
\scprod{A}{B} = \tr(AB) 
\end{align}
for all $A,B \in \T$.

\medskip


\section{Definitions}

\subsection{The total shear tensor, the shear operators and the shear scalars}

As far as we know, the following elementary  extrinsic objects have never been given a name in the literature.

\begin{definition}\label{def_tildeh}
Let $\Phi: (\S,g) \to (\M,\g)$ be the isometric immersion introduced above. Using the previous notations and conventions:
\begin{itemize}
\item The \emph{total shear tensor} $\tildeh$ is defined as the trace-free part of the second fundamental form:
\begin{align*}
\tildeh(X,Y)= h(X,Y) - g(X,Y) H.
\end{align*}
\item The \emph{shear operator} associated to $\xi \in \normsp$ is the trace-free part of the corresponding shape operator:
\begin{align*}
\tildeA_{\xi} = A_{\xi} - \frac 1n \theta_{\xi} \id,
\end{align*}
where $\id$ denotes the identity operator.
\item
The \emph{shear scalar} $\sigma_{\xi}$ associated to $\xi \in \normsp$ is defined up to sign as
\begin{align*}
\sigma_{\xi}^2 = \tr ( \tildeA_{\xi}{}^2 ).
\end{align*}
\end{itemize}
\end{definition}

It is clear that the total shear tensor and the shear operators are related by 
\begin{align*}
g(\tildeA_{\xi}X,Y) = \g(\tildeh(X,Y),\xi)
\end{align*}
for all $X,Y\in\tangsp$ and all $\xi\in\normsp$. The shear scalar was introduced in \cite{S}, yet in another way adapted to the case $n=2$. The alternative definition above is better suited for general dimension $n$ and works since $\tildeA_{\xi}$ is self-adjoint and hence the trace of $\tildeA_{\xi}^2$ is non-negative. We will come back to the ambiguity of the sign later in the paper. For now, notice that
\begin{align}\label{formula1_sigmaN}
\sigma_{\xi}^2 = \scprod{\tildeA_{\xi}}{\tildeA_{\xi}},
\end{align}
where $\scprod{\,}{\,}$ is the scalar product defined in (\ref{scproduct}), and that $\sigma_{\xi}=0$ if and only if $\tildeA_{\xi}=0$.

\begin{remark} 
The name \emph{total shear tensor} comes from the relation existing with the well known ``shear'' of the physics literature. In general relativity shear refers to one of the three kinematic quantities characterizing the flow of (usually timelike or null) vector fields, also called congruences, of a given Lorentzian manifold. The link arises because, if one such vector field is orthogonal to $\S$, then its shear on $\S$ would be given by $|\sigma_\xi|$. Notice that another of these quantities is the expansion that we introduced in (\ref{definition_expansion}). More about congruences and kinematic quantities can be found in \cite{GlobalLorentzianGeometry,HawkingEllis,Wald}.
\end{remark}

Given a local orthonormal frame $\{\xi_1,\ldots,\xi_k\}$ in $\normsp$, we can define the self-adjoint operator
\begin{align}\label{def_J}
\J = \sum_{i=1}^k \g(\xi_i,\xi_i) \tildeA_{\xi_i}^2.
\end{align}
Notice the analogy of definition (\ref{def_J}) with the one given in (\ref{def_B}) for the Casorati operator $\B$. Again, the above definition is frame independent and we have, in analogy with \eqref{formula_B},
\begin{align}\label{formula_J}
g(\J X,Y) = \sum_{i=1}^n \g(\tildeh(X,e_i),\tildeh(Y,e_i))
\end{align}
for any local orthonormal tangent frame $\{e_1,\ldots,e_n\}$ and any $X,Y \in \tangsp$.

\subsection{Several types of umbilicity}\label{section_definitionumbilical}

The concept of umbilical point is classical in Riemannian and semi-Riemannian geometry. Umbilical submanifolds have been extensively studied in the literature, but often only in certain ambient spaces, such as real space forms. For the very first works on the subject the reader can consult the references cited in \cite{S}. For a general overview, we refer to \cite{Chen1973,Chen1981} for the Riemannian setting and to \cite{Chen2011} for the semi-Riemannian one. In particular, results concerning pseudo-umbilical submanifolds (cfr. Definition \ref{definition_umbilicalsubmanifolds} below) in semi-Riemannian geometry can be found in \cite{AliasEstudilloRomero,BektasErgut,Cao,HuJiNiu,KimKim,SongPan,Sun}. 

When a submanifold has co-dimension one, that is when it is a hypersurface, a point can only be umbilical with respect to one normal direction. However, when the co-dimension of a submanifold is higher than one, there are several possible directions along which a point can be umbilical. We will define the different notions of umbilicity in this section with respect to a normal vector field, rather than with respect to a normal vector at a point, but all the definitions make sense when stated pointwise.

\begin{definition}\label{definition_umbilicalsubmanifolds}
Using the notations and conventions introduced above for the immersion $\Phi: (\S,g) \to (\M,\g)$, the submanifold $(\S,g)$ is said to be
\begin{itemize}
\item \emph{umbilical with respect to} $\xi\in\normsp$ if $A_{\xi}$ is proportional to the identity;
\item \emph{pseudo-umbilical} if it is umbilical with respect to the mean curvature $H$;
\item \emph{totally umbilical} if it is umbilical with respect to all $\xi\in\normsp$;
\item \emph{$\xi$-subgeodesic} if there exists $\xi\in\normsp$ such that $h(X,Y)=L(X,Y) \xi$ for all $X,Y\in \tangsp$, where $L$ is a symmetric $(0,2)$-tensor field  on $\S$.
\end{itemize}
\end{definition}

Observe that $\S$ is umbilical with respect to $\xi\in\normsp$ if and only if $A_{\xi} = (\theta_{\xi}/n) \id$ or, equivalently, $\tildeA_{\xi}=0$. If $\S$ is umbilical with respect to $\xi\in\normsp$, then it is umbilical with respect to all vector fields proportional to $\xi$. Therefore, we will often say that $\S$ is umbilical with respect to the normal \emph{direction} spanned by $\xi$ because this is a property that provides information about the umbilical direction span$(\xi)$ regardless of the length {\em and the orientation} of $\xi$. Finally, note that $\S$ is totally umbilical if and only if $h(X,Y)=g(X,Y)H$ for all $X,Y \in \tangsp$ or, equivalently, if and only if $\tildeh=0$.

The notion of $\xi$-subgeodesic submanifold was first introduced in \cite{S2007}. If $\S$ is $\xi$-subgeodesic for some $\xi\in\normsp$, the first normal space, i.e., the image of the second fundamental form, is at most one-dimensional at every point. It follows that all shape operators are proportional at points where $\xi$ does not vanish. Indeed, at such points one has
\begin{align} \label{allshapeopprop}
\g(\xi,\eta_2) A_{\eta_1} = \g(\xi,\eta_1) A_{\eta_2} 
\end{align}
for any $\eta_1,\eta_2 \in \normsp$. Furthermore, at points where $H\neq 0$, $\xi$-subgeodesic submanifolds have $\xi$ proportional to $H$, as can be seen by taking the trace of the equation $h(X,Y)=L(X,Y)\xi$. Therefore if $\S$ is $\xi$-subgeodesic, it is also $H$-subgeodesic.

Notice that, if $\S$ is $\xi$-subgeodesic, then any geodesic $\gamma:I\subseteq\mathbb{R} \rightarrow \S$ of $(\S,g)$ satisfies $\overline{\nabla}_{\gamma'}\gamma' = h(\gamma',\gamma') = f \xi$ for some function $f:I \to \mathbb{R}$, hence $\gamma$ is a subgeodesic with respect to $\xi$ in $(\M,\g)$, see \cite{Schouten}. This explains the terminology \emph{$\xi$-subgeodesic}.

\medskip


\section{The case of co-dimension $k=2$}

From now on we will assume that the immersion $\Phi:(\S,g)\rightarrow(\M,\bar g)$ has \emph{co-dimension two}.

\subsection{The structure of a normal bundle with two-dimensional fibers} \label{section_onthenormalbundle}

The normal bundle can have signature $(+,+)$, $(-,+)$ and $(-,-)$. We will write $(\epsilon_1,\epsilon_2)$, where $\epsilon_1^2=\epsilon_2^2=1$, in order not to specify one of them.
We denote by $\{\xi_1,\xi_2\}$ a local orthonormal frame in $\normsp$ with $g(\xi_i,\xi_i)=\epsilon_i$, $i\in\{1,2\}$. With respect to this frame, the second fundamental form $h$ decomposes as
\begin{align}\label{formula_h_ONbasis}
h(X,Y) = \epsilon_1 \, g(A_{\xi_1}X,Y) \, \xi_1 + \epsilon_2 \, g(A_{\xi_2}X,Y) \, \xi_2
\end{align}
for any $X,Y \in \tangsp$.
In terms of the frame $\{\xi_1,\xi_2\}$, the mean curvature vector can be expressed as
\begin{align}\label{def_H_ONbasis}
H = \frac 1n \left(\epsilon_1 \theta_{\xi_1} \xi_1 + \epsilon_2 \theta_{\xi_2} \xi_2 \right),
\end{align}
where we have used formula (\ref{formula_h_ONbasis}).

With the volume forms of the ambient manifold $\M$ and the submanifold $\S$, it is possible to define a volume form $\omega^\perp$ on the normal bundle. Using $\omega^\perp$, we can then define for any normal vector field $\xi\in\normsp$ its Hodge dual vector field $\hodge \xi\in\normsp$ by (see \cite{BHMS} for the Lorentzian case)
\begin{align} \label{def_hodge}
\g(\hodge \xi,\eta) = \omega^\perp (\xi,\eta)
\end{align}
for all $\eta\in\normsp$. The Hodge dual operator is a linear operator satisfying
\begin{align}\label{properties_hodge}
\hodge(\hodge \xi) = -\epsilon_1\epsilon_2 \, \xi,
\quad\quad
\g(\hodge \xi,\eta) = - \g(\xi,\hodge \eta)
\end{align}
for all $\xi,\eta\in\normsp$. In particular,
\begin{align*}
\g(\hodge \xi,\xi) = 0,
\quad\quad
\g(\hodge \xi,\hodge \xi) = \epsilon_1 \epsilon_2 \, \g(\xi,\xi)
\end{align*}
and if we assume that the orthonormal frame $\{\xi_1,\xi_2\}$ is oriented such that $\omega^{\perp}(\xi_1,\xi_2)=1$, then
\begin{align}\label{formula_hodgeu&v}
\hodge \xi_1 = \epsilon_2 \, \xi_2,
\quad\qquad
\hodge \xi_2 = -\epsilon_1 \, \xi_1.
\end{align}
Combining formulas (\ref{def_H_ONbasis}) and (\ref{formula_hodgeu&v}) the Hodge dual of the mean curvature vector field is
\begin{align}\label{def_hodgeH_ONbasis}
\hodge H = \frac{\epsilon_1\epsilon_2}{n} \left( \theta_{\xi_1} \xi_2 - \theta_{\xi_2} \xi_1 \right).
\end{align}
The vector field $\hodge H$ defines a (generically unique) direction with vanishing expansion:
\begin{align}\label{thetahodgeH}
\theta_{\hodge H} = \tr A_{\hodge H} = n \, \g(H,\hodge H) = 0.
\end{align}

The following notion, which we only define in co-dimension 2, was first introduced in \cite{S}.

\begin{definition}\label{definition_orthoumbilical}
Let $\Phi: (\S,g) \to (\M,\g)$ be an isometric immersion of a Riemannian manifold into a semi-Riemannian manifold with co-dimension $2$ and use the notations and conventions introduced above. The submanifold is said to be \emph{ortho-umbilical} if $A_{\star^\bot H}=0$.
\end{definition}

The terminology \emph{ortho-umbilical} is explained by the fact that the condition $A_{\star^\bot H}=0$ is actually equivalent to $\tilde A_{\hodge H}=0$, that is, to requiring that the submanifold is umbilical with respect to the vector field $\star^\bot H$ orthogonal to $H$, since we know that $\theta_{\hodge H}=0$, cfr. (\ref{thetahodgeH}).

The question arises of whether a submanifold can be pseudo- and ortho-umbilical at the same time. This is answered in the following Lemma.

\begin{lemma}\label{lemma:pseudo+ortho}
Let $\Phi: (\S,g) \to (\M,\g)$ be an isometric immersion of a Riemannian manifold into a semi-Riemannian manifold with co-dimension $2$. If $\S$ is both pseudo-umbilical and ortho-umbilical then at any point either
\begin{enumerate}
\item $(\S,g)$ is totally umbilical, or
\item the mean curvature vector field satisfies $\g(H,H)=0$.
\end{enumerate} 
\end{lemma}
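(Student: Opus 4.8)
The plan is to argue pointwise and to split according to the causal character of the mean curvature vector. Fix $p\in\S$. If $\g(H,H)(p)=0$ (in particular if $H(p)=0$), then conclusion (2) holds at $p$ and there is nothing to prove; so from now on I assume $H(p)\neq 0$ and $\g(H,H)(p)\neq 0$, and the goal becomes to show that $\S$ is totally umbilical at $p$, i.e. that $\tildeh=0$ at $p$.

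The first step is the linear-algebra observation that, under this assumption, $H$ and $\hodge H$ form a basis of the two-dimensional normal space $N_p\S$. Indeed $\g(H,\hodge H)=0$ and $\g(\hodge H,\hodge H)=\epsilon_1\epsilon_2\,\g(H,H)\neq 0$, so $H$ and $\hodge H$ are two mutually orthogonal, non-null normal vectors at $p$, hence linearly independent, hence spanning $N_p\S$.

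Next I would feed in the two hypotheses. Pseudo-umbilicity says $\S$ is umbilical with respect to $H$, i.e. $\tildeA_{H}=0$. Ortho-umbilicity says $A_{\hodge H}=0$; since $\theta_{\hodge H}=0$ by \eqref{thetahodgeH}, this is equivalent to $\tildeA_{\hodge H}=0$. Because the assignment $\xi\mapsto\tildeA_{\xi}$ is linear in $\xi$ (immediate from $g(\tildeA_{\xi}X,Y)=\g(\tildeh(X,Y),\xi)$), it vanishes on the spanning set $\{H,\hodge H\}$ and therefore vanishes identically on $N_p\S$: $\tildeA_{\xi}=0$ for every normal $\xi$ at $p$. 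Applying $g(\tildeA_{\xi}X,Y)=\g(\tildeh(X,Y),\xi)$ once more and using that $\g$ is non-degenerate on the normal bundle, we conclude $\tildeh(X,Y)=0$ for all $X,Y\in T_p\S$, i.e. $\S$ is totally umbilical at $p$, which is conclusion (1).

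I do not expect a genuine obstacle in this argument: its entire content is the two elementary remarks that $\{H,\hodge H\}$ is a basis of the normal plane exactly when $H$ is non-null, and that $\tildeA_{\xi}$ depends linearly on $\xi$. The only point deserving a little care is that the alternative in the statement is pointwise --- the zero set of $\g(H,H)$ need not be open or closed --- so the dichotomy must be established at each point separately, as done above, rather than by a global argument on $\S$.
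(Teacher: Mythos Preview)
Your proof is correct and takes essentially the same approach as the paper: both arguments rest on the fact that $\tildeA_H=0$ and $\tildeA_{\hodge H}=0$ together force $\tildeh=0$ provided $H$ and $\hodge H$ span the normal plane, which is precisely the condition $\g(H,H)\neq 0$. The paper phrases this as the contrapositive (assume $\tildeh\neq 0$, deduce $\g(H,H)=0$ by writing $\tildeh(X,Y)=\tilde L(X,Y)H$ from ortho-umbilicity and then pairing with $H$), while you argue directly via the basis $\{H,\hodge H\}$ and the linearity of $\xi\mapsto\tildeA_\xi$; the content is the same.
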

\begin{proof}
If $(\S,g)$ is totally umbilical the result is trivial. Similarly, if $H=0$ at a point the result is empty. Otherwise, consider $H\neq 0$ and $\tilde h\neq 0$. From $A_{\star^\bot H}=0$ we deduce that $h(X,Y)$, and a fortiori $\tilde h(X,Y)$, points along $H$ for all $X,Y\in \tangsp$, and $\tilde A_{H}=0$ gives then
$$
0=g(\tilde A_H X,Y) =\bar g(\tilde h(X,Y),H) =\tilde L(X,Y) \bar g(H,H)
$$
with $\tilde L(X,Y) \neq 0$, hence $\bar g(H,H)=0$. 
\end{proof}

\subsection{Equivalence between ortho-umbilical and $\xi$-subgeodesic submanifolds}

In the following Proposition 
we prove that the property of being $\xi$-subgeodesic, introduced in Definition \ref{definition_umbilicalsubmanifolds}, is equivalent to the property of being ortho-umbilical when the co-dimension is two.

\begin{proposition}\label{proposition_Nsubgeodesic}
Let $\Phi: (\S,g) \to (\M,\g)$ be an isometric immersion of a Riemannian manifold into a semi-Riemannian manifold with co-dimension $2$. Then the following two conditions are equivalent on any open set where $H\neq 0$:
\begin{itemize}
\item[(1)] $\S$ is ortho-umbilical;
\item[(2)] $\S$ is $\xi$-subgeodesic for some non-zero $\xi\in\normsp$.
\end{itemize}
\end{proposition}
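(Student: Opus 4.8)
The plan is to prove the two implications separately, using the co-dimension 2 structure set up in Section~\ref{section_onthenormalbundle} and working on an open set where $H\neq 0$ (so that $H$ and $\hodge H$ form a basis of the normal bundle at each point).

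\textbf{$(2)\Rightarrow(1)$.} Suppose $\S$ is $\xi$-subgeodesic for some non-zero $\xi\in\normsp$. By the remark following Definition~\ref{definition_umbilicalsubmanifolds}, at points where $H\neq 0$ we have $\xi$ proportional to $H$, so $\S$ is in fact $H$-subgeodesic: $h(X,Y)=L(X,Y)H$ for a symmetric $(0,2)$-tensor $L$. Then for all $X,Y\in\tangsp$,
\begin{align*}
g(A_{\hodge H}X,Y)=\g(h(X,Y),\hodge H)=L(X,Y)\,\g(H,\hodge H)=0,
\end{align*}
using $\g(H,\hodge H)=0$ from \eqref{thetahodgeH}. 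Hence $A_{\hodge H}=0$, i.e.\ $\S$ is ortho-umbilical.

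\textbf{$(1)\Rightarrow(2)$.} Suppose $A_{\hodge H}=0$ on the open set where $H\neq 0$. Decompose $h$ in the orthonormal normal frame $\{\xi_1,\xi_2\}$ via \eqref{formula_h_ONbasis}; equivalently, since $\{H,\hodge H\}$ is a (non-orthonormal but orthogonal) basis there, write
\begin{align*}
h(X,Y)=\frac{\g(h(X,Y),H)}{\g(H,H)}\,H+\frac{\g(h(X,Y),\hodge H)}{\g(H,\hodge H/\!\!\star)}\,\hodge H.
\end{align*}
The second coefficient involves $\g(h(X,Y),\hodge H)=g(A_{\hodge H}X,Y)=0$, so $h(X,Y)$ is proportional to $H$ for all $X,Y$; writing $h(X,Y)=L(X,Y)H$ with $L(X,Y)=\g(h(X,Y),H)/\g(H,H)$ shows $L$ is a symmetric $(0,2)$-tensor field and $\S$ is $H$-subgeodesic, hence $\xi$-subgeodesic with $\xi=H\neq 0$. (One must be slightly careful when $\g(H,H)=0$: then $H$ is null and one cannot normalize by $\g(H,H)$; instead use that $A_{\hodge H}=0$ forces the first normal space to be at most one-dimensional and spanned by a normal vector orthogonal to $\hodge H$, which —since $\hodge H$ spans the orthogonal complement of $\mathrm{span}(H)$ when $H$ is null, or more precisely since $H\perp\hodge H$ and $H\perp H$— must be $H$ itself, so again $h(X,Y)=L(X,Y)H$.)

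\textbf{Main obstacle.} The routine direction is $(2)\Rightarrow(1)$; the delicate point in $(1)\Rightarrow(2)$ is the degenerate case $\g(H,H)=0$, where $H$ is null and $\{H,\hodge H\}$ spans a degenerate 2-plane, so the naive projection formula breaks down. The clean way around it is to avoid projecting onto $H$ altogether: from $A_{\hodge H}=0$ conclude directly that the image of $h$ is contained in $\{\hodge H\}^{\perp\perp}=\mathrm{span}(\hodge H)^{\perp}$ inside the normal plane. Since $\hodge(\hodge H)=-\epsilon_1\epsilon_2 H$ by \eqref{properties_hodge}, the Hodge dual of $\hodge H$ is proportional to $H$, and in a 2-dimensional (possibly degenerate) inner product space the orthogonal complement of $\mathrm{span}(\hodge H)$ is exactly $\mathrm{span}(H)$ precisely because $\g(\hodge H,H)=0$ and $H\notin\mathrm{span}(\hodge H)$ (as $H\neq0$ and $\{H,\hodge H\}$ is a basis). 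Therefore $h(X,Y)\in\mathrm{span}(H)$ for all $X,Y$, giving the required $(0,2)$-tensor $L$ with $h(X,Y)=L(X,Y)H$, uniformly in the causal character of $H$. This argument also recovers the fact, already noted in the text, that a $\xi$-subgeodesic submanifold with $H\neq0$ is automatically $H$-subgeodesic, so the two conditions genuinely match up on the stated open set.
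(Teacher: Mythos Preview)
Your overall strategy is the same as the paper's: for $(1)\Rightarrow(2)$ take $\xi=H$, and for $(2)\Rightarrow(1)$ use that $\xi$-subgeodesic with $H\neq0$ forces $H$-subgeodesic, whence $A_{\hodge H}=0$. The paper simply writes ``it suffices to take $\xi=H$'' for the first implication and leaves the verification to the reader; you try to spell it out, which is where a problem appears.

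You assert twice --- in your opening parenthesis and again as the crux of the ``Main obstacle'' paragraph --- that $H\neq 0$ guarantees $\{H,\hodge H\}$ is a basis of the normal fiber. This is false precisely in the null case you are trying to handle: in Lorentzian signature with $\g(H,H)=0$, the identities $\hodge k=-k$, $\hodge\ell=\ell$ (see \eqref{hodge_kl}) give $\hodge H=\pm H$, so $\{H,\hodge H\}$ spans only a line. Consequently your ``clean way around'' collapses at exactly the point it is needed: the claim ``$H\notin\mathrm{span}(\hodge H)$ because $\{H,\hodge H\}$ is a basis'' is simply wrong there, and with it the deduction that $(\mathrm{span}\,\hodge H)^\perp=\mathrm{span}\,H$ via linear independence.

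The repair is short and works uniformly in the causal character of $H$. Since $\hodge H\neq 0$, the linear form $\eta\mapsto\g(\eta,\hodge H)$ on the $2$-dimensional normal fiber is non-zero, so its kernel is one-dimensional; and $H$ lies in that kernel because $\g(H,\hodge H)=0$. Hence $(\mathrm{span}\,\hodge H)^\perp=\mathrm{span}\,H$ always. From $A_{\hodge H}=0$ one then gets $h(X,Y)\in\mathrm{span}\,H$, i.e.\ $\S$ is $H$-subgeodesic. Your parenthetical remark in the $(1)\Rightarrow(2)$ block essentially reaches this conclusion (the phrase ``at most one-dimensional'' is the right dimension count), so the proof is salvageable once you drop the basis claim. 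Incidentally, the displayed denominator $\g(H,\hodge H/\!\!\star)$ is garbled; presumably $\g(\hodge H,\hodge H)$ was intended.
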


\begin{proof}
To prove the implication (1)$\Longrightarrow$(2), it suffices to take $\xi=H$. Conversely, assume that $\S$ is $\xi$-subgeodesic. Then $h(X,Y)$ is everywhere proportional to $\xi$ for any choice of $X,Y \in \tangsp$ ---so that in particular $A_{\hodge \xi} = 0$. As previously explained $H$ and $\xi$ are proportional, and thus $A_{\hodge \xi} = 0$ implies $A_{\hodge H} = 0$.
\end{proof}

\begin{remark}\label{remarkH=0}
If $H$ vanishes, at most, on a subset with empty interior, then Proposition \ref{proposition_Nsubgeodesic} is true globally on $\S$. Indeed, in Proposition \ref{proposition_Nsubgeodesic} we have proven the equivalence on any subset on which $H$ vanishes nowhere. Since the union of all such subsets is dense in $\S$, the result follows by a continuity argument.
\end{remark}

\noindent
By Proposition \ref{proposition_Nsubgeodesic} and formula (\ref{allshapeopprop}) it follows that the submanifold $\S$ is ortho-umbilical if and only if all shape operators are proportional to each other.

The following is a direct consequence of Proposition \ref{proposition_Nsubgeodesic} and its proof.

\begin{corollary}\label{corollary_orthoWeingartenvanishing}
On any open set where $H\neq 0$ there exists a non-zero normal vector field $\xi\in\normsp$ such that $A_{\hodge \xi}=0$ if and only if $\S$ is ortho-umbilical.
\end{corollary}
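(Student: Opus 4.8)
The plan is to read this off from Proposition \ref{proposition_Nsubgeodesic} together with the elementary fact that a non-zero vector in the two-dimensional normal space has a one-dimensional orthogonal complement inside that space. Both implications will be short.

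For the ``if'' direction I would argue directly. If $\S$ is ortho-umbilical, then $A_{\hodge H}=0$ by Definition \ref{definition_orthoumbilical}. On the open set where $H\neq 0$ one may simply take $\xi=H$: this is a non-zero normal vector field, and $A_{\hodge\xi}=A_{\hodge H}=0$, as required.

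For the ``only if'' direction, suppose there is a non-zero $\xi\in\normsp$ with $A_{\hodge\xi}=0$, and set $\zeta=\hodge\xi$. Since $\hodge$ is a linear isomorphism of $\normsp$ (it is invertible by \eqref{properties_hodge}), $\zeta$ is non-zero. By \eqref{formula_Weingarten} we have $\g(h(X,Y),\zeta)=g(A_\zeta X,Y)=0$ for all $X,Y\in\tangsp$, so $h(X,Y)$ lies in the orthogonal complement $\zeta^\perp$ of $\zeta$ inside $\normsp$. Because $\g$ restricted to the two-dimensional normal space is non-degenerate (whatever its signature), $\zeta^\perp$ is one-dimensional; fixing any non-zero $\zeta'\in\zeta^\perp$ one can write $h(X,Y)=L(X,Y)\,\zeta'$ for a symmetric $(0,2)$-tensor field $L$, which says precisely that $\S$ is $\zeta'$-subgeodesic with $\zeta'\neq 0$. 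Proposition \ref{proposition_Nsubgeodesic}, applicable on the open set where $H\neq 0$, then yields that $\S$ is ortho-umbilical.

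There is no real obstacle here; the only point that needs a little care is that the argument must not depend on the causal character of $\zeta$, since in the Lorentzian case $\zeta$ may be null — this is exactly why I phrase the step via the one-dimensionality of $\zeta^\perp$ rather than introducing an orthogonal pair $\{\zeta,\hodge\zeta\}$, which would degenerate for a null $\zeta$. It is also worth noting explicitly that the hypothesis $H\neq 0$ enters in both directions: to furnish the non-zero candidate $\xi=H$ in one, and to invoke Proposition \ref{proposition_Nsubgeodesic} in the other.
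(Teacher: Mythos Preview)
Your proof is correct and follows essentially the same route as the paper, which simply records the corollary as ``a direct consequence of Proposition \ref{proposition_Nsubgeodesic} and its proof''; you have merely written out those details. One small simplification: since $\g(\xi,\hodge\xi)=\omega^\perp(\xi,\xi)=0$, the vector field $\xi$ itself always spans $(\hodge\xi)^\perp$, so you may take $\zeta'=\xi$ directly---this sidesteps both the separate choice of $\zeta'$ and the null-case discussion in your final paragraph.
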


\medskip


\section{First results}

\subsection{Characterization of being pseudo-umbilical}

The Casorati operator $\B$, defined in (\ref{def_B}), and the operator $\J$, defined in (\ref{def_J}), are related to each other via the relation described in the following Lemma.

\begin{lemma}\label{lemma_relationBJ}
Let $\Phi: (\S,g) \to (\M,\g)$ be an isometric immersion of an $n$-dimensional Riemannian manifold into a semi-Riemannian manifold with co-dimension $2$. Let $\B$ be the Casorati operator and let $\J$ be the operator defined in (\ref{def_J}).
Then
\begin{align} \label{BminJ}
\B - \J =  2 \, \tildeA_H + \g(H,H) \id,
\end{align}
where $H$ is the mean curvature vector field. Moreover, $\emph{\tr}\left(\B-\J\right) = n \, \g(H,H)$.
\end{lemma}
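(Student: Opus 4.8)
The plan is to compute both $\mathcal{B}$ and $\mathcal{J}$ in a fixed local orthonormal normal frame $\{\xi_1,\xi_2\}$ with $\bar g(\xi_i,\xi_i)=\epsilon_i$, using the characterizations \eqref{def_B} and \eqref{def_J}, and then subtract. By definition $\mathcal{B} = \epsilon_1 A_{\xi_1}^2 + \epsilon_2 A_{\xi_2}^2$ and $\mathcal{J} = \epsilon_1 \widetilde{A}_{\xi_1}^2 + \epsilon_2 \widetilde{A}_{\xi_2}^2$. Since $\widetilde{A}_{\xi_i} = A_{\xi_i} - \frac{1}{n}\theta_{\xi_i}\mathbf{1}$, expanding the square gives $\widetilde{A}_{\xi_i}^2 = A_{\xi_i}^2 - \frac{2}{n}\theta_{\xi_i}A_{\xi_i} + \frac{1}{n^2}\theta_{\xi_i}^2\mathbf{1}$, so that
\begin{align*}
\mathcal{B} - \mathcal{J} = \sum_{i=1}^2 \epsilon_i\left(\frac{2}{n}\theta_{\xi_i}A_{\xi_i} - \frac{1}{n^2}\theta_{\xi_i}^2\mathbf{1}\right) = \frac{2}{n}\sum_{i=1}^2 \epsilon_i\theta_{\xi_i}A_{\xi_i} - \frac{1}{n^2}\left(\sum_{i=1}^2\epsilon_i\theta_{\xi_i}^2\right)\mathbf{1}.
\end{align*}

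The next step is to recognize each piece in terms of $H$. From \eqref{def_H_ONbasis} we have $H = \frac1n(\epsilon_1\theta_{\xi_1}\xi_1 + \epsilon_2\theta_{\xi_2}\xi_2)$, hence by linearity of $\xi\mapsto A_\xi$ one gets $A_H = \frac1n(\epsilon_1\theta_{\xi_1}A_{\xi_1} + \epsilon_2\theta_{\xi_2}A_{\xi_2})$, so the first sum above is exactly $2A_H$. For the second, $\bar g(H,H) = \frac{1}{n^2}(\epsilon_1\theta_{\xi_1}^2 + \epsilon_2\theta_{\xi_2}^2)$ using $\bar g(\xi_i,\xi_j)=\epsilon_i\delta_{ij}$, so the second term is $-\bar g(H,H)\mathbf{1}$. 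Therefore $\mathcal{B}-\mathcal{J} = 2A_H - \bar g(H,H)\mathbf{1}$. Finally I rewrite $A_H$ via its own trace-free decomposition: $A_H = \widetilde{A}_H + \frac1n\theta_H\mathbf{1}$ with $\theta_H = n\,\bar g(H,H)$ by \eqref{definition_expansion}, so $2A_H = 2\widetilde{A}_H + 2\bar g(H,H)\mathbf{1}$, giving $\mathcal{B}-\mathcal{J} = 2\widetilde{A}_H + \bar g(H,H)\mathbf{1}$, which is \eqref{BminJ}.

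For the trace statement, take traces: $\operatorname{\textsf{tr}}(\widetilde{A}_H)=0$ since shear operators are trace-free, and $\operatorname{\textsf{tr}}(\mathbf{1})=n$, so $\operatorname{\textsf{tr}}(\mathcal{B}-\mathcal{J}) = n\,\bar g(H,H)$. The only mild subtlety to check is that the identity, derived in a particular oriented orthonormal normal frame, is frame-independent — but this is automatic since $\mathcal{B}$, $\mathcal{J}$, $A_H$ and $\bar g(H,H)\mathbf{1}$ are all manifestly well-defined independently of any frame choice (as already noted for $\mathcal{B}$ and $\mathcal{J}$ after \eqref{formula_B} and \eqref{formula_J}). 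There is no real obstacle here; the computation is routine linear algebra in the fibers of the normal bundle, and the main point is simply to carry the trace-free decomposition through the square and identify the cross term with $A_H$.
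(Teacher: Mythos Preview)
Your proof is correct and follows essentially the same route as the paper: expand $\widetilde{A}_{\xi_i}^2$ in terms of $A_{\xi_i}$, sum with the signs $\epsilon_i$, recognize the two resulting pieces as $2A_H$ and $-\bar g(H,H)\mathbf{1}$, and then pass to $\widetilde{A}_H$ using $\theta_H=n\,\bar g(H,H)$. The only cosmetic difference is ordering (the paper states the trace consequence first and derives it from \eqref{BminJ} afterward), but the computation is the same.
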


\begin{proof}
The expression for the trace of $\B-\J$ follows immediately from \eqref{BminJ}, since $\tildeA_H$ is a trace-free operator. To prove \eqref{BminJ}, first observe that for any $\xi \in \normsp$
\begin{align} \label{for1}
A_{\xi}^2 - \tildeA_{\xi}^2 = A_{\xi}^2 - (A_{\xi} - \frac 1n \theta_{\xi} \id)^2 = \frac 2n \theta_{\xi} A_{\xi} - \frac{1}{n^2} \theta_{\xi}^2 \id.
\end{align}
Moreover, from (\ref{def_H_ONbasis}), we obtain
\begin{align} \label{for2}
A_H = \frac 1n (\epsilon_1 \theta_{\xi_1} A_{\xi_1} + \epsilon_2 \theta_{\xi_2} A_{\xi_2})
\end{align}
and 
\begin{align} \label{for3}
\g(H,H) = \frac{1}{n^2} (\epsilon_1 \theta_{\xi_1}^2 + \epsilon_2 \theta_{\xi_2}^2).
\end{align}
By using the definitions of $\B$ and $\J$ and formulas \eqref{for1}--\eqref{for3}, we then obtain
\begin{align*}
\B-\J & = \epsilon_1 (A_{\xi_1}^2 - \tildeA_{\xi_1}^2) + \epsilon_2 (A_{\xi_2}^2 - \tildeA_{\xi_2}^2) \\
& = \frac 2n (\epsilon_1 \theta_{\xi_1} A_{\xi_1} + \epsilon_2 \theta_{\xi_2} A_{\xi_2}) - \frac{1}{n^2} (\epsilon_1 \theta_{\xi_1}^2 + \epsilon_2 \theta_{\xi_2}^2) \id \\
& = 2 A_H - \g(H,H) \id.
\end{align*}
It now suffices to use the definition of the shear operator and the fact that $\theta_H = n \, \g(H,H)$ to conclude the proof.
\end{proof}

\begin{corollary}\label{corollary_pseudo}
Let $\Phi: (\S,g) \to (\M,\g)$ be an isometric immersion of an $n$-dimensional Riemannian manifold into a semi-Riemannian manifold with co-dimension $2$. Then $\S$ is pseudo-umbilical if and only if 
$$\B-\J=A_H.$$
Or equivalently, if and only if $\B-\J$ is proportional to the identity.
\end{corollary}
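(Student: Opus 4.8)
The plan is to use Lemma \ref{lemma_relationBJ} as the engine: by \eqref{BminJ} we have $\B-\J = 2\,\tildeA_H + \g(H,H)\,\id$, and the whole corollary reduces to understanding when the right-hand side is pinned down by the pseudo-umbilicity condition $\tildeA_H = 0$.

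First I would prove the forward implication. If $\S$ is pseudo-umbilical, then by the observation following Definition \ref{definition_umbilicalsubmanifolds} we have $\tildeA_H = 0$, so \eqref{BminJ} gives $\B-\J = \g(H,H)\,\id$. On the other hand, from $\tildeA_H = 0$ and the definition of the shear operator we get $A_H = \frac{1}{n}\theta_H\,\id$, and using $\theta_H = n\,\g(H,H)$ (which is \eqref{thetahodgeH}-style, coming from \eqref{definition_expansion} with $\xi = H$) this is exactly $A_H = \g(H,H)\,\id$. Hence $\B-\J = A_H$, and since $A_H$ is then a multiple of the identity, $\B-\J$ is proportional to the identity as well.

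For the converse I would show each of the two stated forms of the condition implies $\tildeA_H = 0$. If $\B - \J = A_H$, subtract $\B-\J = 2\,\tildeA_H + \g(H,H)\,\id$ from $A_H = \tildeA_H + \g(H,H)\,\id$ (the latter again from the definition of $\tildeA_H$ together with $\frac1n\theta_H = \g(H,H)$) to obtain $0 = \tildeA_H + \g(H,H)\,\id - 2\,\tildeA_H - \g(H,H)\,\id = -\tildeA_H$, so $\tildeA_H = 0$, i.e. $\S$ is pseudo-umbilical. If instead $\B-\J = \lambda\,\id$ for some function $\lambda$, then $2\,\tildeA_H = (\lambda - \g(H,H))\,\id$; but $\tildeA_H$ is trace-free, so taking the trace forces $\lambda = \g(H,H)$ and hence $\tildeA_H = 0$. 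This also shows that whenever $\B-\J$ is proportional to the identity the proportionality factor is automatically $\g(H,H)$, consistent with the last assertion of Lemma \ref{lemma_relationBJ}.

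I do not expect a genuine obstacle here; the only thing to be careful about is the bookkeeping relating $A_H$, $\tildeA_H$, $\theta_H$ and $\g(H,H)$ — specifically keeping straight that $A_H = \tildeA_H + \frac1n\theta_H\,\id = \tildeA_H + \g(H,H)\,\id$, which is what makes "$\B-\J = A_H$" and "$\B-\J$ proportional to $\id$" turn out to be the same condition. Everything else is linear algebra on top of \eqref{BminJ}.
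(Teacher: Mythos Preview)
Your proof is correct and follows essentially the same route as the paper: both arguments pivot on the identity from Lemma~\ref{lemma_relationBJ} and reduce the statement to $\tildeA_H=0$. The only cosmetic difference is that the paper uses the intermediate form $\B-\J = 2A_H - \g(H,H)\,\id$ (so that $\B-\J=A_H$ is immediately equivalent to $A_H=\g(H,H)\,\id$), whereas you work with the shear form $\B-\J = 2\tildeA_H + \g(H,H)\,\id$; your version has the small advantage of explicitly treating the ``proportional to the identity'' half via the trace argument, which the paper leaves implicit.
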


\begin{proof}
In the proof of Lemma \ref{lemma_relationBJ}, we obtained $\B-\J = 2 A_H - \g(H,H) \id$. Hence the formula $\B-\J=A_H$ is equivalent to $A_H = \g(H,H) \id$, which expresses exactly that the submanifold is pseudo-umbilical.
\end{proof}

When $n=2$, that is when the ambient manifold $\M$ has dimension $4$ and the submanifold $\S$ is a surface, the necessary and sufficient condition for $\S$ to be pseudo-umbilical is the Casorati operator $\B$ being proportional to the identity. This was proven in \cite{S}. (More precisely, it was proven in the Lorentzian case, but as the author explains in the final comments, the same proofs hold in other signature settings too.) In higher dimension, the situation is different:
although the property of both $\B$ and $\J$ being proportional to the identity is sufficient to prove that $\S$ is pseudo-umbilical (this follows from Corollary \ref{corollary_pseudo}), it is not necessary.

\medskip


\section{Main theorem}

\begin{theorem}\label{maintheorem}
Let $\Phi: (\S,g) \to (\M,\g)$ be an isometric immersion of an $n$-dimensional Riemannian manifold into a semi-Riemannian manifold with co-dimension $2$. Then the following conditions are all equivalent.
\begin{itemize}
\item[(i)] $\S$ is umbilical with respect to a non-zero normal vector field $\xi\in\normsp$.
\item[(ii)] Any two shear operators are proportional to each other.
\item[(iii)] There exist $\tildeA \in \T$ and $G \in \normsp$ such that $\langle \tildeA,\tilde A \rangle = n^2$ and 
\begin{align} \label{MTfor1}
\tildeh(X,Y) = g(\tildeA X,Y) \, G
\end{align} 
for all $X,Y\in\tangsp$.
\item[(iv)]
The components of any two shear operators $\tildeA_{\eta_1}$ and $\tildeA_{\eta_2}$ with respect to any tangent frame satisfy
\begin{align} \label{MTfor2}
(\tildeA_{\eta_1})^i_j \, (\tildeA_{\eta_2})^r_s = (\tildeA_{\eta_2})^i_j \, (\tildeA_{\eta_1})^r_s
\end{align}
for all $i,j,r,s=1,\ldots,n$.
\item[(v)]
Any two shear operators $\tildeA_{\eta_1}$ and $\tildeA_{\eta_2}$ satisfy
\begin{align} \label{MTfor3}
\scprod{\tildeA_{\eta_1}}{\tildeA_{\eta_2}}^2 = \sigma_{\eta_1}^2 \, \sigma_{\eta_2}^2.
\end{align}
\end{itemize}
\end{theorem}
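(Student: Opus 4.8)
The plan is to prove the chain of equivalences by first establishing the pointwise-linear-algebra heart of the statement, namely (ii)$\Leftrightarrow$(iii)$\Leftrightarrow$(iv)$\Leftrightarrow$(v), and then connecting it to the geometric condition (i). The key observation is that the total shear tensor $\tildeh$ is a map into the $2$-dimensional normal space, so with respect to the orthonormal normal frame $\{\xi_1,\xi_2\}$ it is completely encoded by the pair of self-adjoint trace-free operators $\tildeA_{\xi_1},\tildeA_{\xi_2}$ via $\tildeh(X,Y)=\epsilon_1\,g(\tildeA_{\xi_1}X,Y)\,\xi_1+\epsilon_2\,g(\tildeA_{\xi_2}X,Y)\,\xi_2$. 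Moreover, for an arbitrary normal $\eta=a\xi_1+b\xi_2$ one has $\tildeA_\eta=a\,\tildeA_{\xi_1}+b\,\tildeA_{\xi_2}$, so condition (ii) ``any two shear operators are proportional'' is equivalent to $\tildeA_{\xi_1}$ and $\tildeA_{\xi_2}$ being proportional (as elements of $\T$), i.e.\ to the $2$-dimensional subspace $\mathrm{span}\{\tildeA_{\xi_1},\tildeA_{\xi_2}\}\subseteq\T$ being at most $1$-dimensional. This should be checked to hold at the bundle level: if it holds pointwise one must produce a globally defined proportionality, which is where the total shear tensor gives the clean packaging in (iii).

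For (ii)$\Rightarrow$(iii): if the shear operators all lie on a line $\R\,\tildeA$ for some fixed self-adjoint trace-free $\tildeA$, normalize it so that $\scprod{\tildeA}{\tildeA}=n^2$ (possible wherever $\tildeh\neq0$; the totally umbilical case is handled separately, with any $G$ and $\tildeA$ working trivially since $\tildeh=0$ — but note (iii) as stated asks for existence of such $\tildeA$ with a fixed norm, so the totally umbilical case needs $\tildeh=0=g(\tildeA X,Y)G$, which forces $G=0$, consistent). Then each $\tildeA_{\xi_i}=c_i\tildeA$ and setting $G=\epsilon_1 c_1\xi_1+\epsilon_2 c_2\xi_2$ gives $\tildeh(X,Y)=g(\tildeA X,Y)G$. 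The normalization constant $n^2$ is presumably chosen so that $G$ turns out to be intrinsically meaningful (related to $\sqrt{\scprod{\J}{\id}}$ or to $H$ in later sections); I would verify the normalization is consistent but not dwell on its motivation. The converse (iii)$\Rightarrow$(ii) is immediate: $\tildeA_\eta=\g(G,\eta)\,\tildeA$ for all $\eta$, so all shear operators are multiples of $\tildeA$.

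For the remaining equivalences among (ii), (iv), (v): condition (iv) says the matrix $((\tildeA_{\eta_1})^i_j)$ and $((\tildeA_{\eta_2})^r_s)$ have all $2\times2$ ``minors'' (in the sense of outer-product equality) vanishing, which is exactly the rank-$\le1$ condition on the pair, hence equivalent to proportionality — standard linear algebra, with the only subtlety being the degenerate case where one of them is zero (then they are trivially proportional and (iv) holds with the zero factor). Condition (v) is the Cauchy–Schwarz equality case for the positive-definite inner product $\scprod{\cdot}{\cdot}$ on $\T$: $\scprod{\tildeA_{\eta_1}}{\tildeA_{\eta_2}}^2=\scprod{\tildeA_{\eta_1}}{\tildeA_{\eta_1}}\scprod{\tildeA_{\eta_2}}{\tildeA_{\eta_2}}$ holds iff $\tildeA_{\eta_1},\tildeA_{\eta_2}$ are linearly dependent — using (\ref{formula1_sigmaN}) to rewrite $\sigma_{\eta_i}^2=\scprod{\tildeA_{\eta_i}}{\tildeA_{\eta_i}}$. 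Since it suffices to test on $\eta_1=\xi_1,\eta_2=\xi_2$ (a general pair is a linear combination and the equality-in-Cauchy–Schwarz property for a given pair propagates appropriately), these reduce to the proportionality of $\tildeA_{\xi_1}$ and $\tildeA_{\xi_2}$.

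Finally (i)$\Leftrightarrow$(ii): $\S$ is umbilical with respect to $\xi\neq0$ iff $\tildeA_\xi=0$. Writing $\xi=a\xi_1+b\xi_2$, this is $a\,\tildeA_{\xi_1}+b\,\tildeA_{\xi_2}=0$ with $(a,b)\neq(0,0)$, which is precisely linear dependence of $\tildeA_{\xi_1},\tildeA_{\xi_2}$, i.e.\ (ii). The main obstacle I anticipate is not any single implication — each is elementary linear algebra once the $2$-dimensionality of the normal bundle is exploited — but rather the careful bookkeeping of the degenerate loci: points where $\tildeh=0$ (totally umbilical points, where every $\xi$ is umbilical and one must check (iii) and (iv),(v) still make sense), and points where one shear operator vanishes but not the other. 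One must make sure the "umbilical direction $\xi$" in (i) and the normalized $\tildeA$, $G$ in (iii) can be chosen consistently — as global sections or at least coherently pointwise — and that the equivalences are genuinely pointwise so no global patching argument is needed beyond what the totally-umbilical dichotomy already forces.
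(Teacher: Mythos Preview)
Your proposal is correct and follows essentially the same approach as the paper: reduce everything to linear dependence of the two shear operators $\tildeA_{\xi_1},\tildeA_{\xi_2}$ in the positive-definite inner-product space $(\T,\scprod{\cdot}{\cdot})$, get (iii) by orthonormal expansion with $G=\epsilon_1 c_1\xi_1+\epsilon_2 c_2\xi_2$, and handle (iv) and (v) as the componentwise rank condition and the Cauchy--Schwarz equality case respectively. The only organizational difference is that the paper closes the cycle via (iii)$\Rightarrow$(i) (using $\hodge G$ as the umbilical direction) rather than your direct (i)$\Leftrightarrow$(ii), but this is cosmetic.
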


\begin{proof}
(i) $\Longrightarrow$ (ii). Let $\eta \in \normsp$ be linearly independent from $\xi$. For any $\eta_1,\eta_2 \in \normsp$, there exist functions $a_1$, $b_1$, $a_2$ and $b_2$ such that $\eta_1 = a_1 \xi + b_1 \eta$ and $\eta_2 = a_2 \xi + b_2 \eta$. Since the correspondence $\bullet \mapsto \tildeA_{\bullet}$ is linear and $\tildeA_{\xi}=0$, we find $\tildeA_{\eta_1} = b_1 \tildeA_{\eta}$ and $\tildeA_{\eta_2} = b_2 \tildeA_{\eta}$. In particular, $\tildeA_{\eta_1}$ and $\tildeA_{\eta_2}$ are proportional.
\smallskip

(ii) $\Longrightarrow$ (iii). Let $\{\xi_1,\xi_2\} \subset \normsp$ be a orthonormal frame such that $\g(\xi_i,\xi_j)=\epsilon_i\delta_{ij}$. Since $\tildeA_{\xi_1}$ and $\tildeA_{\xi_2}$ are proportional, there exist an $\tildeA \in \T$ and functions $\lambda_1$ and $\lambda_2$ such that $\tildeA_{\xi_1} = \lambda_1 \tildeA$ and $\tildeA_{\xi_2} = \lambda_2 \tildeA$.  Then, by orthonormal expansion, we have
\begin{align*}
\tildeh(X,Y) &= \epsilon_1 \g(\tildeh(X,Y),\xi_1) \xi_1 + \epsilon_2 \g(\tildeh(X,Y),\xi_2) \xi_2 \\
&= \epsilon_1 g(\tildeA_{\xi_1}X,Y) \xi_1 + \epsilon_2 g(\tildeA_{\xi_2}X,Y) \xi_2 \\
&= g(\tildeA X,Y) (\epsilon_1 \lambda_1 \xi_1 + \epsilon_2 \lambda_2 \xi_2)
\end{align*}
for any $X,Y \in \tangsp$. It now suffices to choose $G$ in the direction of $\epsilon_1 \lambda_1 \xi_1 + \epsilon_2 \lambda_2 \xi_2$. By rescaling $G$ if necessary, we can ensure that $\langle \tildeA,\tildeA \rangle = \tr(\tildeA^2)=n^2$.
\smallskip

(iii) $\Longrightarrow$ (i). If $G=0$, then $\tildeh=0$ and the submanifold is totally umbilical. If $G \neq 0$, then also $\hodge G \neq 0$ and $\langle \tildeh(X,Y),\hodge G \rangle = 0$ for any $X,Y \in \tangsp$ implies that $\tildeA_{\hodge G}=0$ or, equivalently, that $\S$ is umbilical with respect to $\hodge G$.
\smallskip

(ii) $\Longleftrightarrow$ (iv). First, assume that (ii) is satisfied and choose any $\eta_1,\eta_2 \in \normsp$. If $\tildeA_{\eta_2}=0$, condition (iv) is satisfied. If $\tildeA_{\eta_2} \neq 0$, there exists a function $\lambda$ such that $\tildeA_{\eta_1} = \lambda \tildeA_{\eta_2}$ and both sides of \eqref{MTfor2} equal $\lambda (\tildeA_{\eta_2})^i_j (\tildeA_{\eta_2})^r_s$. Conversely, assume that (iv) is satisfied and choose $\eta_1,\eta_2 \in \normsp$. If $\tildeA_{\eta_2}=0$, condition (ii) is satisfied. If $\tildeA_{\eta_2} \neq 0$, there is at least one component, say $(\tildeA_{\eta_2})^i_j$, which is non-zero. Since \eqref{MTfor2} holds for any $r$ and $s$, it implies $\tildeA_{\eta_1} = ((\tildeA_{\eta_1})^i_j/(\tildeA_{\eta_2})^i_j) \tildeA_{\eta_2}$.
\smallskip

(ii) $\Longleftrightarrow$ (v). Since $\sigma_{\xi}^2 = \langle \tildeA_{\xi},\tildeA_{\xi} \rangle$ for any $\xi \in \normsp$, this equivalence is a direct consequence of the inequality of Cauchy-Schwarz.
\end{proof}

By Theorem \ref{maintheorem}, whenever there is an umbilical direction there is a normal vector field $G$ satisfying condition (iii). Notice however that $G$ is only determined up to sign. But this is natural because, as we already mentioned, being umbilical is a property related to a direction, not to a particular vector field. Using condition (iii), we have $g(\tildeA_{\xi}X,Y) = \g(\tildeh(X,Y),\xi) = g(\tildeA X,Y) \g(G,\xi)$ for all $X,Y\in\tangsp$ and all $\xi \in \normsp$. Hence 
\begin{align} \label{for6.1}
\tildeA_{\xi} = \g(G,\xi) \tildeA
\end{align}
and the corresponding shear scalar is given by $\sigma_{\xi}^2
= \tr ( \tildeA_{\xi}^2 )
= \g(G,\xi)^2 \, \tr ( \tildeA^2 )
= n^2 \, \g(G,\xi)^2$.
Since both $\sigma_{\xi}$ and $G$ are defined up to sign, we can set
\begin{align}\label{formula2_sigmaN}
\sigma_{\xi} = n \, \g(G,\xi)
\end{align}
for all $\xi\in\normsp$. Combining \eqref{for6.1} and \eqref{formula2_sigmaN} yields
\begin{align*}
\tildeA_{\xi} = \frac{\sigma_{\xi}}{n} \tildeA
\end{align*}
for all $\xi\in\normsp$, from which we can deduce
\begin{align*}
\scprod{\tildeA_{\eta_1}}{\tildeA_{\eta_2}} = \sigma_{\eta_1} \, \sigma_{\eta_2}
\end{align*}
for all $\eta_1,\eta_2\in\normsp$.

\begin{remark} \label{remarkMT}
Item (iii) of Theorem \ref{maintheorem} can be restated as follows: 
\begin{align*}
\tildeh(X,Y)^\flat \wedge \tildeh(Z,W)^\flat = 0
\end{align*}
for every $X,Y,Z,W\in\tangsp$, where $\wedge$ is the wedge product of one-forms and $\flat$ denotes the musical isomorphism: if $X$ is a vector field on $(\M,\g)$, then its associated one-form $X^\flat$ is given by $X^\flat(Y)=\g(X,Y)$ for every vector field $Y$ on $\M$.
\end{remark}

\begin{corollary}\label{corollary_weingartencommute}
Let $\Phi: (\S,g) \to (\M,\g)$ be an isometric immersion of an $n$-dimensional Riemannian manifold into a semi-Riemannian manifold with co-dimension $2$.
If $\S$ is umbilical with respect to a non-zero normal vector field then any two shape operators commute.
\end{corollary}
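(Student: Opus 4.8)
The plan is to reduce the commutativity of two arbitrary shape operators $A_{\eta_1}$ and $A_{\eta_2}$ to the commutativity of their trace-free parts $\tildeA_{\eta_1}$ and $\tildeA_{\eta_2}$, and then to exploit the strong structural conclusion already available from Theorem \ref{maintheorem}(iii). First I would write $A_{\eta_i} = \tildeA_{\eta_i} + \frac{1}{n}\theta_{\eta_i}\id$ for $i=1,2$; since the identity commutes with everything, $[A_{\eta_1},A_{\eta_2}] = [\tildeA_{\eta_1},\tildeA_{\eta_2}]$, so it suffices to show that any two shear operators commute.

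Next I would invoke condition (iii) of Theorem \ref{maintheorem}, which holds since $\S$ is umbilical with respect to a non-zero normal vector field: there exist $\tildeA\in\T$ and $G\in\normsp$ with $\tildeh(X,Y) = g(\tildeA X,Y)\,G$ for all $X,Y$. As derived just after the theorem (equation \eqref{for6.1}), this forces $\tildeA_{\xi} = \g(G,\xi)\,\tildeA$ for every $\xi\in\normsp$. In particular $\tildeA_{\eta_1} = \g(G,\eta_1)\,\tildeA$ and $\tildeA_{\eta_2} = \g(G,\eta_2)\,\tildeA$ are scalar multiples of the single operator $\tildeA$, hence they commute trivially: $[\tildeA_{\eta_1},\tildeA_{\eta_2}] = \g(G,\eta_1)\g(G,\eta_2)\,[\tildeA,\tildeA] = 0$. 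Combining with the previous paragraph, $[A_{\eta_1},A_{\eta_2}] = 0$ for all $\eta_1,\eta_2\in\normsp$, which is the claim. (If one prefers to avoid (iii), condition (ii) already says all shear operators are mutually proportional, which gives the same conclusion directly; the totally umbilical case is subsumed since then all $\tildeA_\xi$ vanish.)

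There is essentially no obstacle here: the corollary is an immediate consequence of the fact, packaged in Theorem \ref{maintheorem}, that the umbilical condition collapses the whole family of shear operators onto one line in $\T$. The only point requiring the briefest care is the reduction from shape operators to shear operators via the identity term, and noting that the proportionality factors $\g(G,\eta_i)$ are functions on $\S$ rather than constants — but scalar functions still commute with operators pointwise, so this causes no difficulty.
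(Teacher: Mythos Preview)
Your proof is correct and follows essentially the same route as the paper's: reduce $[A_{\eta_1},A_{\eta_2}]$ to $[\tildeA_{\eta_1},\tildeA_{\eta_2}]$ using $A_{\eta_i}=\tildeA_{\eta_i}+\tfrac{1}{n}\theta_{\eta_i}\id$, then invoke Theorem~\ref{maintheorem} to conclude that all shear operators are proportional and hence commute. The paper cites condition~(ii) directly rather than going through~(iii) and~\eqref{for6.1}, but this is a cosmetic difference only.
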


\begin{proof}
By condition (ii) of Theorem \ref{maintheorem} it follows that any two shear operators commute. It is easily seen that $[\tildeA_{\eta_1},\tildeA_{\eta_2}] = 0$ if and only if $[A_{\eta_1},A_{\eta_2}] = 0$ for any $\eta_1,\eta_2\in\normsp$.
\end{proof}

A consequence of Corollary \ref{corollary_weingartencommute} is that at any point of the submanifold there exists a (generically unique) orthonormal basis of the tangent space for which all shape operators diagonalize simultaneously. 

The converse of Corollary \ref{corollary_weingartencommute} is in general not true. However, it is true when the dimension of the ambient manifold $\M$ is $4$ and $\S$ is a surface ($n=2$), as described in the next corollary.

\begin{corollary}\label{corollary_n=2}
A necessary and sufficient condition for a spacelike surface in a $4$-dimensional semi-Riemannian manifold to be umbilical with respect to a non-zero normal direction is that any two shape operators commute.
\end{corollary}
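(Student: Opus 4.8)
The plan is to reduce the statement to Theorem~\ref{maintheorem}. The ``only if'' direction is immediate: it is precisely Corollary~\ref{corollary_weingartencommute} specialised to $n=2$. So all the content is in the ``if'' direction, which I would prove by showing that, for a surface, the hypothesis ``any two shape operators commute'' is equivalent to condition (ii) of Theorem~\ref{maintheorem} --- ``any two shear operators are proportional'' --- whence (i) holds by that theorem.

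First I would pass from shape to shear operators: since $A_\xi=\tildeA_\xi+\tfrac1n\theta_\xi\,\id$ and $\id$ is central, $[A_{\eta_1},A_{\eta_2}]=[\tildeA_{\eta_1},\tildeA_{\eta_2}]$ for all $\eta_1,\eta_2\in\normsp$ (the elementary remark already used in the proof of Corollary~\ref{corollary_weingartencommute}), so the hypothesis is equivalent to: any two shear operators commute.

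The core step, and the one where $n=2$ is genuinely used, is the pointwise linear-algebra fact that a nonzero self-adjoint trace-free operator on a $2$-dimensional Euclidean space has distinct eigenvalues $\mu,-\mu$; hence any self-adjoint operator commuting with it is diagonal in the same orthonormal eigenbasis and, being trace-free, is a scalar multiple of it (and the zero operator is proportional to anything). Equivalently, for trace-free symmetric $2\times2$ matrices $\left(\begin{smallmatrix}a&b\\ b&-a\end{smallmatrix}\right)$ and $\left(\begin{smallmatrix}c&d\\ d&-c\end{smallmatrix}\right)$ the commutator equals $2(ad-bc)\left(\begin{smallmatrix}0&1\\ -1&0\end{smallmatrix}\right)$, which vanishes iff $(a,b)$ and $(c,d)$ are parallel, i.e. iff the matrices are proportional. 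Applying this at each point to a local orthonormal normal frame $\{\xi_1,\xi_2\}$, commuting shear operators are proportional, so condition (ii) of Theorem~\ref{maintheorem} holds and thus so does (i).

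I expect the only obstacle to be a bookkeeping point rather than anything substantial: ``proportional'' in Theorem~\ref{maintheorem}(ii) should be read in the smooth sense used in the proof of (ii)$\Rightarrow$(iii), and the proportionality factor degenerates exactly where a shear operator vanishes. I would handle this as in Remark~\ref{remarkH=0}: on the open set where not all shear operators vanish, pick locally a nonvanishing $\tildeA_{\eta_0}$, write the remaining shear operators as $C^\infty$ multiples of it (the factor is a ratio of matrix components, smooth where $\tildeA_{\eta_0}\neq0$), and thereby produce the data $(\tildeA,G)$ of Theorem~\ref{maintheorem}(iii); at points where every shear operator vanishes one has $\tildeh=0$ and $\S$ is totally umbilical. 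A continuity argument then gives a globally umbilical direction unless $\S$ is totally umbilical, in line with the uniqueness already established.
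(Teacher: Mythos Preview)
Your proposal is correct and takes essentially the same approach as the paper: necessity from Corollary~\ref{corollary_weingartencommute}, sufficiency via the elementary $2\times2$ trace-free linear-algebra observation, and an appeal to Theorem~\ref{maintheorem}. The only cosmetic difference is that the paper simultaneously diagonalises and then verifies condition~(iv) of the theorem whereas you verify condition~(ii) directly; your closing paragraph on smoothness is unnecessary caution, since Theorem~\ref{maintheorem} and its proof operate pointwise.
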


\begin{proof}
The necessity of the condition follows from Corollary \ref{corollary_weingartencommute}. To prove that the condition is also sufficient, choose any two normal vector fields $\xi$ and $\eta$. Then $A_\xi$ and $A_\eta$ commute, such that both operators can be diagonalized simultaneously. Denote by $\lambda_1,\lambda_2$ and $\mu_1,\mu_2$ the eigenvalues of $A_\xi$ and $A_\eta$ respectively.
In an orthonormal frame the corresponding shear operators are then given by
\begin{align*}
\tildeA_\xi
= \frac{1}{2} \left(
\begin{array}{cc}
\lambda_1 - \lambda_2 & 0 \\
0 & \lambda_2 - \lambda_1
\end{array}
\right),
\qquad
\tildeA_\eta
= \frac{1}{2} \left(
\begin{array}{cc}
\mu_1 - \mu_2 & 0 \\
0 & \mu_2 - \mu_1
\end{array}
\right).
\end{align*}
It is now easily seen that $(\tildeA_\xi)^i_j (\tildeA_\eta)^r_s = (\tildeA_\eta)^i_j (\tildeA_\xi)^r_s$ for any $i,j,r,s \in \{1,2\}$ such that, by Theorem \ref{maintheorem}, the surface is umbilical with respect to some non-zero normal vector field.
\end{proof}

Corollary \ref{corollary_n=2} was already proven in \cite{S} in the case of a Lorentzian ambient space.

\medskip


\section{Consequences of the main theorem}

\subsection{The umbilical direction}

If $\{\xi_1,\xi_2\}$ is an orthonormal frame in the normal bundle with $\g(\xi_i,\xi_j) = \epsilon_i \delta_{ij}$, one can deduce from (\ref{formula2_sigmaN}) the following explicit expression for $G$:
\begin{align}\label{formula_G_ONbasis}
G = \frac 1n (\epsilon_1 \sigma_{\xi_1} \xi_1 + \epsilon_2 \sigma_{\xi_2} \xi_2).
\end{align}

\begin{corollary}\label{corollary_uniqueness}
Let $\Phi: (\S,g) \to (\M,\g)$ be an isometric immersion of an $n$-dimensional Riemannian manifold into a semi-Riemannian manifold with co-dimension $2$. If $\S$ is umbilical with respect to a normal direction, then such a direction is unique and it is spanned by $\hodge G$, unless $G=0$, in which case $\S$ is totally umbilical.
\end{corollary}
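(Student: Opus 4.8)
The plan is to read everything off condition (iii) of Theorem~\ref{maintheorem} together with the identity $\tildeA_{\xi} = \g(G,\xi)\,\tildeA$ that was already derived from it in \eqref{for6.1}. By hypothesis $\S$ is umbilical with respect to some non-zero normal direction, so the implication (i)$\Longrightarrow$(iii) of Theorem~\ref{maintheorem} furnishes a $\tildeA\in\T$ with $\scprod{\tildeA}{\tildeA}=n^2$ and a $G\in\normsp$ such that $\tildeh(X,Y)=g(\tildeA X,Y)\,G$, and hence $\tildeA_{\xi}=\g(G,\xi)\,\tildeA$ for every $\xi\in\normsp$.

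First I would dispose of the degenerate case. If $G=0$ then $\tildeh=0$, so $\tildeA_{\xi}=0$ for every $\xi\in\normsp$; that is, $\S$ is totally umbilical and every normal direction is umbilical, so there is nothing left to prove. (Formula \eqref{formula_G_ONbasis} makes this transparent: $G=0$ precisely when both shear scalars vanish.)

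Assume now $G\neq 0$. Since $\tr(\tildeA^2)=\scprod{\tildeA}{\tildeA}=n^2\neq 0$, the operator $\tildeA$ is non-zero, so the relation $\tildeA_{\xi}=\g(G,\xi)\,\tildeA$ gives, for $\xi\in\normsp$,
\[
\S \text{ is umbilical with respect to } \xi \quad\Longleftrightarrow\quad \tildeA_{\xi}=0 \quad\Longleftrightarrow\quad \g(G,\xi)=0 .
\]
Thus the umbilical directions are exactly the lines contained in $G^{\perp}:=\{\xi\in\normsp:\g(G,\xi)=0\}$, the kernel of the linear functional $\xi\mapsto\g(G,\xi)$ on the (two-dimensional) normal fibre. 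Because the restriction of $\g$ to the normal bundle is non-degenerate --- its signature is $(\epsilon_1,\epsilon_2)$ with $\epsilon_i^2=1$ --- and $G\neq 0$, this functional is non-zero, so $G^{\perp}$ is one-dimensional. On the other hand $\hodge G\neq 0$, since $\hodge$ is invertible by \eqref{properties_hodge}, and $\g(\hodge G,G)=0$; hence $\hodge G\in G^{\perp}$ and therefore $G^{\perp}=\operatorname{span}(\hodge G)$. Consequently $\S$ admits exactly one umbilical direction, namely $\operatorname{span}(\hodge G)$.

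I do not expect a serious obstacle here; the only point that needs a little care is to \emph{not} split the argument according to the causal character of $G$. When the normal bundle is Lorentzian the vector $G$ can be null, so $\{G,\hodge G\}$ need not be an orthogonal basis of the normal fibre and one cannot conclude by an explicit orthogonal decomposition. This is bypassed by arguing directly with the dimension of the kernel $G^{\perp}$, which is $1$ regardless of whether $G$ is spacelike, timelike or null.
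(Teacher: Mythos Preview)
Your argument is correct and follows essentially the same route as the paper's proof: both invoke condition (iii) of Theorem~\ref{maintheorem} to conclude that an umbilical direction $\xi$ must satisfy $\g(G,\xi)=0$, and then identify this orthogonal complement with $\operatorname{span}(\hodge G)$. Your version is more explicit about why $G^{\perp}$ is one-dimensional (via non-degeneracy of $\g$ on the normal bundle) and why $\hodge G$ spans it, which is a genuine improvement in clarity---particularly for the null case you flag at the end, where the paper's terse ``thus $\xi$ has to be proportional to $\hodge G$'' leaves that step implicit.
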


\begin{proof}
Suppose that $\S$ is umbilical with respect to a non-zero vector field $\xi\in\normsp$. This means that its shear operator vanishes, $\tildeA_{\xi} = 0$, or, equivalently, $\g(\tildeh(X,Y),\xi) = 0$ for every $X,Y\in\tangsp$. From Theorem \ref{maintheorem} (iii) it then follows that $G$ and $\xi$ are orthogonal and thus $\xi$ has to be proportional to $\hodge G$.
If $G=0$, then $\tildeh=0$ and $\S$ is totally umbilical.
\end{proof}

From Corollary \ref{corollary_uniqueness}, \eqref{formula_hodgeu&v} and (\ref{formula_G_ONbasis}) one obtains an explicit expression for the umbilical direction:
\begin{align}\label{formula_hodgeG}
\hodge G = \frac{\epsilon_1 \epsilon_2}{n} (\sigma_{\xi_1} \, \xi_2 - \sigma_{\xi_2} \, \xi_1).
\end{align}
It is possible to find other expressions for the umbilical direction in terms of the eigenvalues of the shape operators $A_{\xi_1}$ and $A_{\xi_2}$. We know that if an umbilical direction exists, these two operators can be diagonalized simultaneously. Let $\lambda_i$ and $\mu_i$ ($i=1,\ldots,n$) denote the eigenvalues of $A_{\xi_1}$ and $A_{\xi_2}$ respectively. Then
$\lambda_i - \theta_{\xi_1}/n$ and $\mu_i - \theta_{\xi_2}/n$ ($i=1,\ldots,n$) are the eigenvalues of the shear operators $\tildeA_{\xi_1}$ and $\tildeA_{\xi_2}$. We know that there exist functions $a_1$ and $a_2$ such that $\tildeA_{a_1\xi_1+a_2\xi_2} = a_1 \tildeA_{\xi_1} + a_2 \tildeA_{\xi_2} = 0$. Obviously, $(a_1,a_2)$ has to be proportional to $(\mu_i - \theta_{\xi_2}/n, -(\lambda_i - \theta_{\xi_1}/n) )$ for any $i=1,\ldots,n$. Hence
\begin{align}\label{formula_Numb}
\eta_i
= \left( \mu_i - \frac{\theta_{\xi_2}}{n} \right) \xi_1 - \left( \lambda_i - \frac{\theta_{\xi_1}}{n} \right) \xi_2
\end{align}
is a normal vector field with respect to which $\S$ is umbilical for any $i=1,\ldots,n$. All these vector fields are proportional to each other and to $\hodge G$. Moreover, using \eqref{formula_hodgeG} and \eqref{formula_Numb}, one sees
\begin{align*}
\sum_{i=1}^n \g(\eta_i,\eta_i) 
&= \epsilon_1 \sum_{i=1}^n \left(\mu_i - \frac{\theta_{\xi_2}}{n}\right)^2 + \epsilon_2 \sum_{i=1}^n \left(\lambda_i - \frac{\theta_{\xi_1}}{n}\right)^2 \\
&= \epsilon_1 \tr(\tildeA_{\xi_2}^2) + \epsilon_2 \tr(\tildeA_{\xi_1}^2) \\
&= \epsilon_1 \sigma_{\xi_2}^2 + \epsilon_2 \sigma_{\xi_1}^2 \\
&= n^2 \, \g(\hodge G,\hodge G).
\end{align*}

\subsection{Characterization of $\S$ being ortho-umbilical} 

For the notation used in the following corollary we refer to Remark \ref{remarkMT}.

\begin{corollary}\label{corollary_ortho}
Let $\Phi: (\S,g) \to (\M,\g)$ be an isometric immersion of an $n$-dimensional Riemannian manifold into a semi-Riemannian manifold with co-dimension $2$. On any open set where $H$ does not vanish, $\S$ is ortho-umbilical if and only if 
\begin{align*}
h(X,Y)^\flat \wedge H^\flat = 0
\end{align*}
for every $X,Y\in\tangsp$.
\end{corollary}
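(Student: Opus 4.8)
The plan is to translate the statement into the language of the total shear tensor and then apply Corollary~\ref{corollary_uniqueness} (equivalently Theorem~\ref{maintheorem}(iii)). The key observation is that $\S$ being ortho-umbilical means $A_{\hodge H}=0$, which --- since $\theta_{\hodge H}=0$ by \eqref{thetahodgeH} --- is the same as $\tildeA_{\hodge H}=0$, i.e. $\S$ is umbilical with respect to the non-zero normal direction spanned by $\hodge H$ (non-zero precisely because $H\neq 0$ on the open set in question, using \eqref{properties_hodge}). So on this open set ortho-umbilicity is equivalent to the existence of \emph{an} umbilical direction together with the extra information that this direction is $\hodge H^\perp$, i.e. orthogonal to $H$.

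First I would treat the forward implication. If $\S$ is ortho-umbilical then it is umbilical with respect to $\hodge H$, so by Theorem~\ref{maintheorem}(iii) there are $\tildeA\in\T$ and $G\in\normsp$ with $\tildeh(X,Y)=g(\tildeA X,Y)\,G$. By Corollary~\ref{corollary_uniqueness} the umbilical direction is spanned by $\hodge G$, hence $\hodge G$ is proportional to $\hodge H$, and therefore (applying $\hodge$ again and using \eqref{properties_hodge}) $G$ is proportional to $H$. Then $h(X,Y)=\tildeh(X,Y)+g(X,Y)H = g(\tildeA X,Y)\,G + g(X,Y)H$ is, for each $X,Y$, a linear combination of $G$ and $H$, both of which span the same one-dimensional direction; hence $h(X,Y)$ is proportional to $H$, which gives $h(X,Y)^\flat\wedge H^\flat = 0$.

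For the converse, suppose $h(X,Y)^\flat\wedge H^\flat=0$ for all $X,Y$. On the open set where $H\neq 0$ this says $h(X,Y)$ is proportional to $H$ for every $X,Y$, i.e. $h(X,Y)=L(X,Y)H$ for a symmetric $(0,2)$-tensor $L$ (indeed $L(X,Y)=\g(h(X,Y),H)/\g(H,H)$ if $\g(H,H)\neq0$, and one argues by continuity/directly otherwise, or one simply observes that proportionality to the fixed vector field $H$ pointwise defines such an $L$). Thus $\S$ is $H$-subgeodesic, and in particular $A_{\hodge H}=0$ since $\g(h(X,Y),\hodge H)=L(X,Y)\g(H,\hodge H)=0$; that is, $\S$ is ortho-umbilical. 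Alternatively one can invoke Proposition~\ref{proposition_Nsubgeodesic} directly: being $H$-subgeodesic on the open set where $H\neq0$ is equivalent to being ortho-umbilical.

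The only mild subtlety --- the ``main obstacle'' --- is handling the wedge-product condition cleanly: $h(X,Y)^\flat\wedge H^\flat=0$ with $H\neq0$ must be correctly read as ``$h(X,Y)$ lies in $\mathrm{span}(H)$ for every $X,Y$,'' and in the forward direction one must be careful that $\hodge G$ and $\hodge H$ spanning the same line is genuinely equivalent to $G$ and $H$ spanning the same line, which follows from $\hodge$ being a linear isomorphism of each normal fibre (consequence of \eqref{properties_hodge}). Everything else is a direct substitution into Theorem~\ref{maintheorem} and Corollary~\ref{corollary_uniqueness}, so I do not expect any real difficulty beyond bookkeeping.
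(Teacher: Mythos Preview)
Your proof is correct. For the converse you argue exactly as the paper does: the wedge condition with $H\neq 0$ gives $h(X,Y)=L(X,Y)H$, hence $A_{\hodge H}=0$ (or equivalently invoke Proposition~\ref{proposition_Nsubgeodesic}).

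For the forward direction your route differs from the paper's. The paper simply appeals to Proposition~\ref{proposition_Nsubgeodesic}: ortho-umbilical implies $H$-subgeodesic, so $h(X,Y)$ is proportional to $H$ and the wedge vanishes. You instead pass through Theorem~\ref{maintheorem}(iii) and Corollary~\ref{corollary_uniqueness} to identify $G$ with a multiple of $H$ and then decompose $h=\tildeh+gH$. This is valid, and your remark that $\hodge$ is a fibrewise isomorphism (so $\hodge G\parallel\hodge H\iff G\parallel H$) closes the loop; the totally umbilical case $G=0$ is harmless since then $h(X,Y)=g(X,Y)H$. The paper's argument is shorter and avoids the main theorem entirely, while yours has the minor virtue of illustrating how the corollary sits as a special case of the general umbilical-direction machinery. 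Either way the content is the same: ortho-umbilical $\Longleftrightarrow$ $h$ takes values in $\mathrm{span}(H)$.
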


\begin{proof}
Suppose that $\S$ is ortho-umbilical, i.e., that $A_{\hodge H}=0$. Then, from Propostion \ref{proposition_Nsubgeodesic} we know that $\S$ is $H$-subgeodesic so that $h(X,Y)$ is indeed proportional to the mean curvature vector field $H$ for every $X,Y\in\tangsp$.

Conversely, suppose that $h(X,Y)^\flat \wedge H^\flat = 0$ for every $X,Y\in\tangsp$ and that $H\neq 0$ at a point in $\S$, then $h(X,Y)=L(X,Y)H$ for all $X$ and $Y$ tangent to $\S$ at that point, and this implies $A_{\hodge H}=0$ there.
\end{proof}
A comment similar to Remark \ref{remarkH=0} applies here too, as it is obvious.

It is worth noticing that the property of $\S$ being ortho-umbilical is somehow special. In fact, it implies that $\S$ is $H$-subgeodesic and also that the total shear tensor $\tildeh$ and the second fundamental form tensor $h$ are always proportional to each other and to $H$.

\medskip


\section{The Lorentzian case}\label{section_Lorentziancase}

In this section we will always assume the normal bundle has signature $(-,+)$ or, equivalently, that $(\M,\g)$ is a Lorentzian manifold with signature $(-,+,\ldots,+)$.

\subsection{Null frame in $\normsp$}

In this setting one can choose a frame $\{k,\ell\}$ consisting of null vector fields in the normal bundle $\normsp$, so that
\begin{align} \label{g_kl}
\g(k,k) = \g(\ell,\ell) = 0, \qquad \g(k,\ell) = -1,
\end{align}
the last equality being a convenient normalization condition. For any $X,Y \in\tangsp$, the second fundamental form $h$ and the mean curvature vector field $H$ decompose in the null frame as
\begin{align}
h(X,Y)
&= - \, g(A_k X,Y) \ell - g(A_\ell X,Y) k, \label{formula_tildeh_nullbasis}\\
H
&= \frac{1}{n} \left( - \, \theta_k \, \ell - \theta_\ell \, k \right). \label{def_H_nullbasis}
\end{align}
The quantities $\theta_k = \tr (A_k)$ and $\theta_\ell = \tr (A_\ell)$, are called \emph{null expansions}.

Using formula \eqref{formula_B} one can prove that the Casorati operator $\B$ is minus the anti-commutator of the two null shape operators:
\begin{align} \label{Basanticomm}
\B = - \{ A_k , A_{\ell} \}
= - A_k A_\ell - A_\ell A_k.
\end{align}

After changing the order if necessary, we may assume that the frame $\{k,\ell\}$ is positively oriented, i.e., that $\omega^{\perp}(k,\ell)=1$. It then follows from \eqref{def_hodge} that
\begin{align} \label{hodge_kl}
\hodge k = - k, \quad \quad \hodge \ell = \ell
\end{align}
and from \eqref{def_H_nullbasis} that
\begin{align}\label{def_hodgeH_nullbasis}
\hodge H = \frac{1}{n} \left( - \, \theta_k \, \ell + \theta_\ell \, k \right).
\end{align}
We also have $\g(\hodge\xi,\hodge\xi) = - \g(\xi,\xi)$ for every $\xi \in \normsp$, and in particular
\begin{align}\label{formula:g(H,H)=-g(hodgeH,hodgeH)}
\g(\hodge H,\hodge H) = -\g(H,H)= \frac{2}{n^2}\theta_k \theta_{\ell}.
\end{align}
The total shear tensor decomposes as
\begin{align}\label{formula_tildeh}
\tildeh(X,Y) = - \g(\tildeA_k X,Y) \ell - \g(\tildeA_\ell X,Y) k
\end{align}
for every $X,Y \in \tangsp$, where the operators $\tildeA_k$ and $\tildeA_{\ell}$ are called \emph{null} shear operators. Moreover, the functions $\sigma_k$ and $\sigma_{\ell}$ are the \emph{null} shear scalars. Using formula (\ref{formula_tildeh}), one can prove that the operator $\J$ equals minus the anti-commutator of the two null shear operators:
\begin{align}\label{formula_J_nullbasis}
\J = - \, \{ \tildeA_k , \tildeA_{\ell} \}.
\end{align}
\noindent
Notice the analogy between this formula and \eqref{Basanticomm}.

\subsection{The causal character of the umbilical direction}

Assuming there exists an umbilical direction, it follows from \eqref{formula2_sigmaN}, \eqref{g_kl} and \eqref{hodge_kl} that the vector fields $G$ and $\hodge G$ can be expressed as
\begin{align}\label{formulas_GhodgeG_nullbasis}
G = - \frac 1n (\sigma_{\ell} k + \sigma_k \ell),
\qquad \quad
\hodge G = \frac 1n (\sigma_{\ell} k - \sigma_k \ell).
\end{align}
A way to determine the sign of $\g(\hodge G, \hodge G)$ is by considering the operators $\J$ and $\B$. By using that $\tildeh(X,Y)=g(\tildeA X,Y)G$ and hence $h(X,Y) = g(\tildeA X,Y)G + g(X,Y)H$ for every $X,Y \in \tangsp$ in combination with formulas \eqref{formula_J} we obtain
\begin{align*}
\J = \g(G,G) \tildeA^2.
\end{align*}
Taking the trace gives $\tr(\J) = n^2 \g(G,G)$ and hence
\begin{align*}
\g(\hodge G, \hodge G) = -\g(G,G) = -\frac{1}{n^2} \tr(\J).
\end{align*}
By formula (\ref{formula_J_nullbasis}) we obtain
\begin{align*}
\g(\hodge G,\hodge G)
= \frac{2}{n^2} \tr(\tildeA_k \tildeA_\ell)
=\frac{2}{n^2} \scprod{\tildeA_k}{\tildeA_\ell}
\end{align*}
and thus we have
\begin{align*}
& \scprod{\tildeA_k}{\tildeA_\ell} < 0 \ \Rightarrow \ \hodge G \text{ is timelike}, \\
& \scprod{\tildeA_k}{\tildeA_\ell} > 0 \ \Rightarrow \ \hodge G \text{ is spacelike},\\
& \scprod{\tildeA_k}{\tildeA_\ell} = 0 \ \Rightarrow \ \hodge G \text{ is null.}
\end{align*}
Using (\ref{BminJ}) we also get $\tr(\B) = n^2\g(G,G) + n\g(H,H)$ so that
\begin{align*}
\g(\hodge G, \hodge G) = -\frac{1}{n^2} (\tr(\B)-n\g(H,H))
\end{align*}
which reproves the same result found in \cite{S}.
All this implies
\begin{align*}
& \tr(\J) < 0 \ \Rightarrow \ \hodge G \text{ is spacelike}, && \tr(\B) < n\g(H,H) \ \Rightarrow \ \hodge G \text{ is spacelike},\\
& \tr(\J) > 0 \ \Rightarrow \ \hodge G \text{ is timelike},  && \tr(\B) > n\g(H,H) \ \Rightarrow \ \hodge G \text{ is timelike},\\
& \tr(\J) = 0 \ \Rightarrow \ \hodge G \text{ is null},      && \tr(\B) = n\g(H,H) \ \Rightarrow \ \hodge G \text{ is null}.
\end{align*}

\subsection{Submanifolds which are both pseudo- and ortho-umbilical}

From Lemma \ref{lemma:pseudo+ortho} we know that the only interesting case when $\S$ can be pseudo- and ortho-umbilical at the same time arises in the Lorentzian signature we are considering now. Thus, we analyze this case in a little more detail.

When $\S$ is not totally umbilical, we have
\begin{proposition}
Let $\Phi: (\S,g) \to (\M,\g)$ be an isometric immersion of a Riemannian manifold into a Lorentzian manifold with co-dimension $2$. The three following conditions are equivalent at any point $p\in \S$ where $H\neq 0$ and $\S$ is not totally umbilical:
\begin{enumerate}
\item $\B -\J=0$,
\item $(\S,g)$ is both pseudo-umbilical and ortho-umbilical,
\item $\B=0$ and $\J=0$.
\end{enumerate}
Furtheremore, in all cases we have $\g(H,H)=0$ at $p$.
\end{proposition}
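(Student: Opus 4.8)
The plan is to run the cyclic chain of implications $(3)\Rightarrow(1)\Rightarrow(2)\Rightarrow(3)$ and to read off $\g(H,H)=0$ from the steps themselves. The single structural observation that drives everything is this: as soon as $\S$ is pseudo-umbilical one has $A_H=(\theta_H/n)\id=\g(H,H)\id$, so in the relevant case $\g(H,H)=0$ the mean curvature $H$ is a \emph{non-zero null} normal vector; by \eqref{properties_hodge} its Hodge dual $\hodge H$ is then also null, non-zero (since $\hodge$ is invertible in Lorentzian signature, $\hodge\hodge=\id$) and orthogonal to $H$, hence proportional to $H$. Equivalently, in a null frame $\{k,\ell\}$ adapted so that $H$ is a multiple of $k$ one has $\theta_k=0$ and $\hodge H=\pm H$ by \eqref{hodge_kl}. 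The implication $(3)\Rightarrow(1)$ is immediate, since $\B=\J=0$ forces $\B-\J=0$.

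For $(1)\Rightarrow(2)$: by Lemma \ref{lemma_relationBJ} one has $0=\B-\J=2\tildeA_H+\g(H,H)\id$; taking the trace and using that $\tildeA_H$ is trace-free gives $\g(H,H)=0$, hence $\tildeA_H=0$ and $A_H=\g(H,H)\id=0$, so $\S$ is pseudo-umbilical. By the observation above, $\hodge H$ is then proportional to $H$, so $A_{\hodge H}$ is a multiple of $A_H=0$; thus $A_{\hodge H}=0$ and $\S$ is ortho-umbilical.

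For $(2)\Rightarrow(3)$: pseudo-umbilicality gives $\tildeA_H=0$, so Lemma \ref{lemma_relationBJ} reduces to $\B-\J=\g(H,H)\id$. Since $\S$ is both pseudo- and ortho-umbilical but, by hypothesis, not totally umbilical, Lemma \ref{lemma:pseudo+ortho} forces $\g(H,H)=0$ at $p$, so $\B=\J$. Passing to a null frame adapted to $H$ (so $H=-\tfrac{\theta_\ell}{n}k$ with $\theta_k=0$, where $\theta_\ell\neq0$ because $H\neq0$), pseudo-umbilicality gives $A_H=\g(H,H)\id=0$, hence $A_k=0$ and therefore $\tildeA_k=A_k-\tfrac{\theta_k}{n}\id=0$; then \eqref{Basanticomm} and \eqref{formula_J_nullbasis} give $\B=-\{A_k,A_\ell\}=0$ and $\J=-\{\tildeA_k,\tildeA_\ell\}=0$, which is $(3)$. (Alternatively one may invoke Theorem \ref{maintheorem} and Corollary \ref{corollary_uniqueness}: the umbilical direction is the span of $\hodge G$, which here must be the null direction of $H$, so $\g(G,G)=-\g(\hodge G,\hodge G)=0$ and then $\J=\g(G,G)\tildeA^2=0$, $\B=\J+\g(H,H)\id=0$.) The extra claim $\g(H,H)=0$ at $p$ was produced inside each of the three implications.

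The step that requires genuine care is the use of the hypothesis that $\S$ is \emph{not} totally umbilical: it is exactly what excludes the degenerate configuration $\tildeh=0$ with $\g(H,H)\neq0$, in which $\S$ is trivially pseudo- and ortho-umbilical while $\B-\J=\g(H,H)\id\neq0$ and $\B=\g(H,H)\id\neq0$, so that $(1)$ and $(3)$ fail. In the argument above this hypothesis enters only through Lemma \ref{lemma:pseudo+ortho} in $(2)\Rightarrow(3)$ (one could instead reprove the needed case directly from the null-frame decompositions of $\tildeA_H$ and $\tildeA_{\hodge H}$); everything else is routine bookkeeping with the null-frame identities of Section \ref{section_Lorentziancase}.
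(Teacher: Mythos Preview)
Your proof is correct and follows the same cyclic scheme as the paper. The implication $(1)\Rightarrow(2)$ is essentially identical to the paper's argument. For $(2)\Rightarrow(3)$, however, you take a somewhat different route: the paper rederives $\g(H,H)=0$ from scratch via the representation $h(X,Y)=L(X,Y)H$ (Corollary~\ref{corollary_ortho}) combined with pseudo-umbilicality, and then computes $\B$ directly from formula~\eqref{formula_B} to show it vanishes. You instead invoke Lemma~\ref{lemma:pseudo+ortho} (which packages the same computation) and then pass to a null frame with $k$ aligned with $H$, so that $A_k=0$ and hence $\B=-\{A_k,A_\ell\}=0$ and $\J=-\{\tildeA_k,\tildeA_\ell\}=0$ by the anticommutator identities \eqref{Basanticomm} and \eqref{formula_J_nullbasis}. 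Your null-frame route is a bit more economical, since it reuses machinery already developed in Section~\ref{section_Lorentziancase}; the paper's route is more self-contained at this point. One minor quibble: the claim that $\g(H,H)=0$ is ``produced inside each of the three implications'' is slightly overstated --- it is not produced in the trivial step $(3)\Rightarrow(1)$ --- but since the equivalence is established, having it in one implication suffices for the ``furthermore'' clause.
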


\begin{proof}
$(1) \Rightarrow (2)$: Assume $\B=\J$. Then, by Lemma \ref{lemma_relationBJ}, we obtain $2 \tildeA_H + \g(H,H) \id=0$. Taking the trace of this formula gives $\g(H,H)=0$ and hence also $\tildeA_H=0$. Therefore, $\S$ is pseudo-umbilical at $p$ and $H$ is a non-zero null vector there, so that from \eqref{formula:g(H,H)=-g(hodgeH,hodgeH)}  follows that $\hodge H$ is also null and, being orthogonal to $H$, proportional to $H$. Thus $\tildeA_{\hodge H}=0$ too.

\noindent $(2) \Rightarrow (3)$: Since $\S$ is ortho-umbilical at $p$ and as $H\neq 0$ there, Corollary \ref{corollary_ortho} implies the existence of a $2$-covariant symmetric tensor $L$ such that 
\begin{equation} \label{h_ortho}
h(X,Y)=L(X,Y)H 
\end{equation}
for every $X,Y\in T_p\S$. On the other hand, since $\S$ is pseudo-umbilical, we have $\g(\tildeh(X,Y),H) = 0$. Using the definition of $\tildeh$ and \eqref{h_ortho}, this condition reduces to
\begin{align}\label{corollary_pseudo+ortho_formula3}
\left( L(X,Y) - g(X,Y) \right) \g(H,H) = 0
\end{align}
for every $X,Y\in T_p\S$.
If $\g(H,H)$ did not vanish, then we would have $L=g$ at $p$ and, by \eqref{h_ortho}, $\S$ would be totally umbilical there against hypothesis. Thus, we deduce $\g(H,H) = 0$ at $p$. Using this together with $\tildeA_{H}=0$ in Lemma \ref{lemma_relationBJ} we derive $\B-\J =0$ at $p$. Now, we can compute the Casorati operator to check that it actually vanishes at $p$ (and therefore so does $\J$): if $\{e_1,\ldots,e_n\}$ is a local orthonormal basis in $T_p\S$, by formula (\ref{formula_B}) we have
\begin{align*}
g(\B(X),Y)
= \sum_{i=1}^n g( L(X,e_i)H , L(Y,e_i)H )
= \g(H,H) \sum_{i=1}^n L(X,e_i) L(Y,e_i) = 0.
\end{align*}

\noindent $(3) \Rightarrow (1)$: Trivial.
\end{proof}

From a physical point of view, the Lorentzian $n=2$ case is of particular interest. In fact, spacelike surfaces immersed in a \emph{spacetime} (a $4$-dimensional Lorentzian manifold) satisfying the property of having $H$ null everywhere on the surface are of extreme importance in the framework of gravitational theories. A complete classification of these surfaces can be found in \cite{S2007}. In the following section we will show an example of those.

\medskip


\section{Example: Kerr spacetime}
The Kerr spacetime can be locally described as the manifold $\mathbb{R}^2\times S^2$ endowed with the Lorentzian metric \cite{Kerr}
\begin{align*}
\bar g =& -\Big(1-\frac{2mr}{\rho^2}\Big) \, dv^2 + 2 \, dvdr + \rho^2 \, d\theta^2 - \frac{4amr\sin^2\theta}{\rho^2} \, d\varphi dv +\\
&- 2a\sin^2\theta \, d\varphi dr + \frac{(r^2+a^2)^2-a^2\Delta\sin^2\theta}{\rho^2}\sin^2\theta \, d\varphi^2,
\end{align*}
given in the so-called Kerr coordinates $\{v,r,\theta,\varphi\}$,
where $m$ and $a$ are two constants called \emph{mass} and \emph{angular momentum}.
We assume that $m$ is positive, and define the quantities $\rho$ and $\Delta$ by 
\begin{align*}
\rho = \sqrt{r^2+a^2\cos^2\theta},
\qquad
\Delta = r^2-2rm+a^2.
\end{align*}
The variables $\{\theta,\varphi\}$ are typical angular coordinates on $S^2$ so that there is a trivial coordinate problem at the axis of symmetry $\theta \rightarrow 0$. The spacetime has a more severe problem at $\rho =0$, where there is a curvature singularity and thus this set must be cut out of the manifold.
For a complete exposition about the Kerr metric and its physical interpretation, one can for example consult \cite{HawkingEllis,Inverno,ONeill(Kerr),Wald}. Let us just mention here that Kerr's metric is of paramount importance in general relativity because it is the unique solution of the vacuum field equations describing an isolated black hole. 

In Kerr's spacetime there is a family of preferred spacelike surfaces defined by constant values of $v$ and $r$. Let $\S$ be one of these surfaces, then $\S$ is compact and topologically $S^2$ ---unless $r=0$. The vector fields $\partial_{\theta}$ and $\partial_{\phi}$ are tangent to $\S$ at every point of $\S$,
the first fundamental form reads (with constant $r$)
$$
g=\rho^2 \, d\theta^2 +\frac{(r^2+a^2)^2-a^2\Delta\sin^2\theta}{\rho^2}\sin^2\theta \, d\varphi^2,
$$
and the following vector fields constitute a frame on $\normsp$ :
\begin{align*}
\xi
&= \frac{1}{\rho^2}\Big( (r^2+a^2) \, \partial_v + \Delta\partial_r + a \, \partial_\varphi \Big), \\
\eta
&= \frac{1}{\rho^2} \Big( a^2\sin^2\theta \, \partial_v + (r^2+a^2) \,\partial_r + a \, \partial_\varphi \Big).
\end{align*}
Notice that $\xi^{\flat}=dr$ and $\eta^{\flat}=dv$. In the basis $\{\partial_\theta,\partial_\varphi\}$, their corresponding shape operators are
\begin{align*}
A_\xi
&= \frac{\Delta}{\rho^2} M_1 \\
A_\eta
&= \frac{1}{\rho^2} \left( (r^2+a^2) M_1 -2\frac{m}{\rho^2} r a^3\sin^3\theta \cos\theta M_2 \right),
\end{align*}
where
\begin{align*}
M_1
=\left( \begin{array}{cc}
\displaystyle{\frac{r}{\rho^2} } &  0 \\
0  & \displaystyle{ \frac{\rho^2\left( r+\frac{m}{\rho^4}a^2(a^2\cos^2\theta-r^2)\sin^2\theta \right)}{(r^2+a^2)^2-a^2\Delta\sin^2\theta}}
\end{array} \right)
\end{align*}
and
\begin{align*}
M_2
=\left( \begin{array}{cc}
0  & \displaystyle{ \frac{1}{\rho^2}} \\
\displaystyle{\frac{\rho^2}{((r^2+a^2)^2-a^2\Delta\sin^2\theta)\sin^2\theta} }& 0
\end{array} \right).
\end{align*}

By Corollary \ref{corollary_n=2} $\S$ is umbilical with respect to a normal direction if and only if $[A_\xi,A_\eta]=0$. Explicitly:
\begin{align}\label{Kerr_umbilicalcondition}
-2 \Delta \frac{m}{\rho^6} r a^3\sin^3\theta \cos\theta [M_1,M_2] = 0.
\end{align}
Equation (\ref{Kerr_umbilicalcondition}) is satisfied if one of the following conditions holds: $[M_1,M_2]=0$ (which is equivalent to $4mr^2 + \rho^2(r-m)=0$ and implies $r<m$), or $\theta\in \{0,\frac{\pi}{2},\pi\}$, or $r=0$, or $a=0$ or $\Delta=0$. The case $a=0$ leads to the Schwarzschild spacetime, describing a non-rotating black hole, and every such $\S$ is actually totally umbilical. This is to be expected, because in this case the spacetime is spherically symmetric, and all round spheres are then totally umbilical \cite{S2007}. Letting this special case aside, among the previous conditions, $\Delta=0$ and $r=0$ are the only possibilities that lead to surfaces $\S$ \emph{entirely} umbilical along a normal direction. We consider them in turn:

\begin{itemize}
\item For the case $r=0$ with any constant $v$, we must keep in mind that the equator in any of these surfaces lies on the spacetime singularity ($\rho=0$ if $\theta=\pi/2$), and thus they have to be avoided. Thus, these surfaces are defined by non-compact, hemi-spherical caps, with either $\theta \in [0,\pi/2)$ or $\theta\in (\pi/2,\pi]$. In any of these options, it is easily seen from the above that $A_\xi = A_\eta =M_1/\cos^2\theta$ with
$$
M_1 =\left( \begin{array}{cc}
0 &  0 \\
0  & \displaystyle{ \frac{m}{a^2}\tan^2\theta }
\end{array} \right).
$$
Thus we deduce that $A_{\xi-\eta}=0$. Note that $\xi -\eta = \partial_v$ on these surfaces, and that $g(\xi,\partial_v)=0$ too. Hence, every such surface is $H$-subgeodesic, and thus also ortho-umbilical, with $H\in$ span$\{\xi\}$. One can further check that these hemi-spherical caps are locally flat.
\item
Suppose now that $\Delta=0$. This requires $m^2 \geq a^2$ and the hypersurface $\Delta =0$ has two connected components given by $r=r_\pm$ with
$$
r_\pm :=  m\pm \sqrt{m^2-a^2} 
$$
except in the case $m=|a|$, called the extreme case, where both of them coincide. 
It follows from the formulas above that $A_\xi=0$ at $r=r_+$ or $r=r_-$. Then $\tildeA_\xi =0$ and $\theta_\xi =\mbox{tr} (A_\xi)= g(\xi,H)=0$ too there, the former saying that any surface with constant $v$ in $\Delta =0$  is umbilical along the normal vector field $\xi$ and the second that $\xi$ is proportional to $\hodge H$. Thus, every such surface is ortho-umbilical.

The total shear tensor for any of these surfaces is given by (with $\rho^2_\pm = r_\pm^2 +a^2 \cos^2\theta$) 
\begin{align*}
\tildeh(\partial_{\theta},\partial_{\theta})
&= a^2\sin^2\theta \, \frac{4mr_\pm^2 + \rho_\pm^2(r_\pm-m)}{2(r_\pm^2+a^2)^2} \, \xi \\
\tildeh(\partial_{\theta},\partial_{\phi})
&= - a^2\sin^2\theta \, \frac{2mr_\pm a\sin\theta\cos\theta}{\rho_\pm^2(r_\pm^2+a^2)} \, \xi \\
\tildeh(\partial_{\phi},\partial_{\phi})
&= - a^2\sin^2\theta \, \frac{\sin^2\theta\left( 4mr_\pm^2 + \rho^2(r_\pm-m) \right)}{2\rho_\pm^4} \, \xi.
\end{align*}
Its image is spanned by $\xi$ and hence, by Theorem \ref{maintheorem} (iii) and Corollary \ref{corollary_uniqueness}, these surfaces are umbilical with respect to $\hodge \xi$. But $\hodge \xi \in$ span$\{H\}$ so that all these surfaces are also pseudo-umbilical. As they are not  totally umbilical, $\xi$ and $\hodge \xi$ (equivalently $H$ and $\hodge H$) must be proportional. This is indeed the case since $\xi$ (equivalently $H$) is null at $\Delta =0$.
\end{itemize}

We have proven the following result:
\begin{proposition}
In the Kerr spacetime with $a\neq 0$, the only surfaces defined by constant values of $v$ and $r$ which are umbilical along a normal direction are those sitting on either 
\begin{enumerate}
\item the (timelike) hypersurface $r=0$ or 
\item the (null) hypersurface $\Delta=0$ ---these exist only when $m\geq |a|$.
\end{enumerate}
In case (1) they are locally flat, non-compact topological disks,  which are ortho-umbilical and $H$-subgeodesic.
In case (2) the surfaces are compact topological spheres both pseudo- and ortho-umbilical. They happen to have a non-vanishing null mean curvature vector field $H$, and thus they are also marginally trapped.
\end{proposition}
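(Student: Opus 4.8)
The plan is to reduce everything to the already-proven machinery for $n=2$. By Corollary~\ref{corollary_n=2}, a surface $\S$ of constant $v$ and $r$ is umbilical along a normal direction if and only if its two shape operators commute, and with the explicit frame $\{\xi,\eta\}$ and the matrices $A_\xi=(\Delta/\rho^2)M_1$, $A_\eta=\rho^{-2}\big((r^2+a^2)M_1-2m\rho^{-2}ra^3\sin^3\theta\cos\theta\,M_2\big)$ the commutator collapses, as in \eqref{Kerr_umbilicalcondition}, to a scalar multiple of $[M_1,M_2]$. So the first step is to list the zero sets of the prefactor $-2\Delta\,m\rho^{-6}ra^3\sin^3\theta\cos\theta$ and of $[M_1,M_2]$: namely $\Delta=0$, $r=0$, $\theta\in\{0,\tfrac\pi2,\pi\}$, $a=0$, and $[M_1,M_2]=0$ (equivalently $4mr^2+\rho^2(r-m)=0$). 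Since we assume $a\neq0$, I would then argue that the conditions $\theta\in\{0,\tfrac\pi2,\pi\}$ and $4mr^2+\rho^2(r-m)=0$ only cut out lower-dimensional subsets of a generic such surface — the latter because $\rho^2=r^2+a^2\cos^2\theta$ genuinely varies with $\theta$ when $a\neq0$, so the equation cannot hold identically on a surface of fixed $r$ — whence the surfaces umbilical \emph{everywhere} along a normal direction are exactly those with $r=0$ or $\Delta=0$. (The Schwarzschild limit $a=0$ is set aside separately, all its round spheres being totally umbilical by \cite{S2007}.)

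For case (1), $r=0$: I would first note that $\rho=0$ at $\theta=\pi/2$ is the curvature singularity, forcing restriction to a hemispherical cap $\theta\in[0,\pi/2)$ or $(\pi/2,\pi]$, a non-compact topological disk. Substituting $r=0$ gives $A_\xi=A_\eta=M_1/\cos^2\theta$ with $M_1=\mathrm{diag}\big(0,(m/a^2)\tan^2\theta\big)$, hence $A_{\xi-\eta}=0$; since $\xi-\eta=\partial_v$ on these surfaces and $g(\xi,\partial_v)=0$, the surface is $\partial_v$-subgeodesic, therefore $H$-subgeodesic and (by Proposition~\ref{proposition_Nsubgeodesic}) ortho-umbilical, with $H\in\mathrm{span}\{\xi\}$. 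Local flatness I would obtain by computing the Gauss curvature of $g$ at $r=0$ directly from the induced metric (or, equivalently, via the Gauss equation using that a shape operator has rank $\le1$); the direct computation looks cleanest.

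For case (2), $\Delta=0$: solving $r^2-2rm+a^2=0$ gives $r=r_\pm=m\pm\sqrt{m^2-a^2}$, real precisely when $m\ge|a|$ and coinciding in the extreme case; both $r_\pm>0$ when $a\neq0$, so these surfaces are compact topological spheres like every constant-$(v,r)$ slice with $r\neq0$. At $r=r_\pm$ the prefactor $\Delta/\rho^2$ of $A_\xi$ vanishes, so $A_\xi=0$, hence $\tildeA_\xi=0$ and $\theta_\xi=\tr A_\xi=0$; the first says $\S$ is umbilical along $\xi$, and $\theta_\xi=n\,\g(\xi,H)=0$ says $\xi\propto\hodge H$, i.e. $\S$ is ortho-umbilical. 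From the displayed components of $\tildeh$ one reads that the image of $\tildeh$ is spanned by $\xi$, so Theorem~\ref{maintheorem}(iii) together with Corollary~\ref{corollary_uniqueness} identifies the umbilical direction as $\hodge\xi\in\mathrm{span}\{H\}$, making $\S$ pseudo-umbilical as well. Finally I would evaluate $\g(\xi,\xi)=\Delta/\rho^2=0$ on $\Delta=0$, so $\xi$ — hence $H$ — is null, and check by direct substitution (or invoke the classical fact that the constant-$v$ cross-sections of the horizons $r=r_\pm$ are marginally outer trapped) that $H\neq0$, so $H$ is a non-vanishing null mean curvature vector, i.e. the surface is marginally trapped; the relation $H\propto\hodge H$ forced by Lemma~\ref{lemma:pseudo+ortho} is then automatic since $H$ null forces $\hodge H$ null and orthogonal to $H$. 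The main obstacle is the bookkeeping of the first step — rigorously excluding the accidental solution loci $\theta=\mathrm{const}$ and $4mr^2+\rho^2(r-m)=0$ as candidates for surfaces umbilical on a dense open set — while the rest is substitution into the explicit metric combined with results already established.
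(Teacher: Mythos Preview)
Your proposal is correct and follows essentially the same approach as the paper's own argument: reduce to the commutator criterion of Corollary~\ref{corollary_n=2}, isolate the vanishing loci of the prefactor and of $[M_1,M_2]$, discard those that cannot hold identically on a full constant-$(v,r)$ surface, and then treat the two surviving cases $r=0$ and $\Delta=0$ via the substitutions and invocations of Proposition~\ref{proposition_Nsubgeodesic}, Theorem~\ref{maintheorem}(iii), and Corollary~\ref{corollary_uniqueness} that you describe. If anything, you are slightly more explicit than the paper in justifying why the conditions $\theta\in\{0,\tfrac{\pi}{2},\pi\}$ and $4mr^2+\rho^2(r-m)=0$ cannot hold on an entire surface, and in flagging the need to check $H\neq 0$ on $\Delta=0$.
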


The surfaces we have found in case (2), those characterized by constant values of $v$ and by $r=r_\pm$, foliate the null hypersurface defined by $\Delta=0$. In gravitational physics the two connected components of $\Delta=0$ are called the \emph{event horizon} ($r=r_+$) and the \emph{Cauchy horizon} ($r=r_-$), and they enclose the black hole region of the Kerr spacetime \cite{HawkingEllis,Inverno,Kriele,Wald}: a region containing closed trapped surfaces. In the present section we have thus proven that the horizons of the Kerr black hole are foliated by marginally trapped surfaces which are both pseudo- and ortho-umbilical. This fact is already well known in gravitational physics, where they say that the null hypersurface $\Delta =0$ is \emph{expansion- and shear-free} along its null generator. 

\bigskip

\end{document}